\documentclass[a4paper]{article}
\usepackage[utf8]{inputenc}
\usepackage{authblk}
\usepackage[all]{xy}
\usepackage{amsmath,amssymb,amsthm,enumerate,graphicx}

% THEOREM Environments 

\theoremstyle{plain}
 \newtheorem{theorem}{Theorem}[section]
 \newtheorem{corollary}[theorem]{Corollary}
 \newtheorem{lemma}[theorem]{Lemma}
 
 \theoremstyle{definition}
 \newtheorem{definition}[theorem]{Definition}
 \theoremstyle{remark}

 \numberwithin{equation}{section}

% NUMBER FIELDS

\newcommand{\Z}{\mathbb Z}

\newcommand{\R}{\mathbb R}
\newcommand{\C}{\mathbb{C}}
\newcommand{\F}{\mathbb{F}}

% GROUP THEORY
\newcommand{\gen}[1]{\langle #1 \rangle}
\newcommand{\inv}[1]{{#1^{-1}}}
\newcommand{\lcm}{\mathrm{lcm}}

\newcommand{\checkk}[1]{#1{\check\ \check\ }}

\newcommand{\ot}{\leftarrow}

\newcommand{\mapsfrom}{\mathrel{\reflectbox{\ensuremath{\mapsto}}}}

\newcommand{\Ast}{\mathop{\scalebox{1.2}{\raisebox{-0.1ex}{$\ast$}}}}

% REPRESENTATIONS AND HOMOLOGY
\newcommand{\GL}{\mathrm{GL}}
\newcommand{\SL}{\mathrm{SL}}
\newcommand{\PGL}{\mathrm{PGL}}
\newcommand{\PSL}{\mathrm{PSL}}

\newcommand{\ZG}{\mathbb{Z}G}
\newcommand{\ZY}{\mathbb{Z}Y}
\newcommand{\CG}{\mathbb{C}G}
\newcommand{\CY}{\mathbb{C}Y}

\newcommand{\Hom}{\mathrm{Hom}}

\newcommand{\Aut}{\mathrm{Aut}}

\newcommand{\Fix}{\mathrm{Fix}}

\newcommand{\im}{\mathrm{im}~}

\newcommand{\ZU}{Z_{\mathrm{u}}}

\newcommand{\GU}{\Gamma_{\mathrm{u}}}

% TOPOLOGY
\renewcommand{\c}{\mathbf{c}}
\renewcommand{\v}{\mathbf{v}}
\renewcommand{\o}{\mathbf{o}}
\newcommand{\e}{\mathbf{e}}
\renewcommand{\r}{\mathbf{r}}
\newcommand{\s}{\mathbf{s}}
\newcommand{\f}{\mathbf{f}}
\newcommand{\p}{\mathbf{p}}
\newcommand{\q}{\mathbf{q}}
\renewcommand{\b}{\mathbf{b}}
\renewcommand{\t}{\mathbf{t}}
\newcommand{\genus}{\mathfrak{g}}
\newcommand{\Sph}{\mathbb{S}}
\newcommand{\Disk}{\mathbb{D}}

% TYPESETS

 %opening
\title{Covering groups of minimal exponent}
\author{Nicola Sambonet}

% \email{nsambonet@gmail.com}
\affil{Instituto de Matemática e Estatística --
Universidade Federal da Bahia\\
Avenida Adhemar de Barros s/n, 40170-110, 
Salvador, Brazil\\
e--mail: nsambonet@gmail.com}

\begin{document}

\maketitle
\begin{abstract}
\noindent
  Presenting a finite group by a free product of finite cyclic groups the Hopf formula for the Schur multiplier affords also a covering group, and this has minimal exponent provided that the order of the generators is preserved.
  This condition corresponds to a covering projection between simplicial complexes, and so a presentation by a Fuchsian group corresponds to a covering projection between compact surfaces.
\end{abstract}

\section{Introduction}
From the point of view of representation theory, a \emph{cover} of a finite group is a finite central extension $1\to A\to E\to G\to 1$ with the property that all the projective representations of $G$ can be lifted to ordinary representations of $E$  \cite{Isaacs}.
In general the projective lifting problem is controlled by a homomorphism $\eta$ from the dual of $A$ into the Schur multiplier, that is the second cohomology group with complex coefficients $H^2G=H^2(G,\C^\times)$.
Thus the covers are the extensions such that $\eta$ is surjective.
If $\eta$ is an isomorphism then $E$ is a Schur cover, this is an extension of $G$ by a copy of $H^2G$ and so it has minimal order among the covers. 
A fundamental theorem of Schur asserts that a Schur cover always exists, and illustrates how to construct one provided that $H^2G$ is known \cite{Schur1904}.
Thus, in a second paper on the subject \cite{Schur1907}, Schur describes a formula that is by far the most efficient way to compute the multiplier, namely for any free presentation $1\to R\to F\to G\to 1$ we have
\[H^2G\simeq R\cap[F,F]/[R,F]\ .\]
However, the computational complexity of this formula is hard.
For instance it is only recently that, combining this formula with powers--commutators calculus and computer assistance, M.~Vaughan--Lee discovered the first examples of odd order such that the exponent of $H^2G$ does not divide the exponent of $G$. They come more than a century after Schur introduced the multiplier and half a century after A.J.~Bayes, J.~Kautsky, and J.W.~Wamsley found an analogue example for the case of 2-groups \cite{BaKaWa,VaughanLee2021}.
The formula can be proved by observing that $E=F/[R,F]$ maps onto any Schur cover, vice versa, there is a way to produce a Schur cover once that $E$ is known.
However, being an infinite group $E$ itself is not suitable for representation theory, and both this way and the original construction to obtain a Schur cover depend on a choice.
In turn there can be many non-isomorphic Schur covers, and they are far from being universal objects, with the exception of the perfect groups \cite{Aschbacher}.
This facts suggest to drift from the Schur covers, and the aim of this paper is to describe those covers which can be obtained by a more general version of the formula.
Indeed the homology theory offers a generalization, and in fact the rediscovery of the formula by H.~Hopf is a milestone in the birth of this subject, being the first evidence relating algebraic topology with representation theory \cite{Brown,Dieudonne,Hopf}.
Thus, the same formula describes the second homology group $H_2G=H_2(G,\Z)$ for any group $G$, and that for finite groups the two formulas coincide is nowadays clarified since the universal coefficient theorem gives an isomorphism $H^2G\simeq\Hom(H_2G,\C)$.
Moreover, for any group extension $1\to R\to F\to G\to 1$ there is an exact sequence
\begin{equation}\label{equation:LHS short exact sequence}
 H_2F\to H_2G\to(H_1R)_G\to H_1F\to H_1G\to 1
\end{equation}
where $(H_1R)_G\simeq R/[R,F]$ and $H_1F\simeq F/[F,F]$, therefore, we obtain an analogue of the Schur--Hopf formula provided that $H_2F=0$.
Clearly this is the case for a free group $F$, still we can consider a broader variety of presentations.
In this regard our main result is the following (the join of Theorem \ref{Satz:Hopf properties of the covers} and \ref{Satz:hyperiodic covers have minimal exponent}).
\begin{theorem}\label{Satz:main}
 Let $1\to R\to F\to G\to 1$ be a presentation of a finite group $G$ by a finite free product of finite cyclic groups $F$, then $E=F/[R,F]$ is a finite cover of $G$.
 Moreover, if the projection $F\to G$ preserves the order of the generators, then $E$ has minimal exponent among the covers.
\end{theorem}
The first statement is elementary and already it simplifies the classical situation: here the group $E$ is finite and does not require any choice to be constructed.
On the other hand, the order preserving property relates to the unitary cover, which has been introduced by the author to describe bounds for the exponent of the multiplier  \cite{Sambonet2015,Sambonet2017}.
The unitary cover has minimal exponent among the covers, and there are basic examples where the minimal exponent is not realized by any Schur cover.
Moreover, the unitary cover is defined by means of an identity and not by a choice.
In fact a formula for the unitary cover exists (Theorem \ref{Satz:the unitary cover is universal}) and this is the major step towards the proof of Theorem \ref{Satz:main}, which depends on the general property that the Schur construction is natural with respect to subgroups and generation (Lemma \ref{Satz:Schur construction and generators}).
Another remarkable fact is that the new shape of the formula yields a notion of growth which is absent in the classical theory, but is inline with other contemporary topics such as the Burnside problem and coclass theory, which are central in the current discussion of the exponent problem \cite{MoravecCoclass,Sambonet2017,VaughanLee2021}.
Thus any presentation as in Theorem \ref{Satz:main} affords a \emph{profinite periodic cover} $E_\infty=\varprojlim F/[R,_kF]$, that is a profinite group obtained as an inverse limit of a sequence of covers, and it is a pro-$p$ group provided that both $G$ and $F/[F,F]$ are $p$-groups.
Based on a famous theorem of K.~Iwasawa, we prove the following result (Theorem \ref{Satz:profinite covers})  which extends the characterization of the $p$-groups having trivial multiplier due to D.~L.~Johnson \cite{Iwasawa,Johnson}.
\begin{theorem}
 If a finite group $G$ has non cyclic abelianization, then all of its profinite periodic covers have infinite order.
\end{theorem}
While the first part of this manuscript is devoted to purely algebraic aspects of the Schur--Hopf formula, the second part focuses on the underlying topology.
For Lie groups and for topological groups that are connected, locally connected, and locally simply connected, the covering spaces have a natural group structure and so they are the covering groups \cite{Pontryagin}.
However, finite groups have discrete topology, in this respect any surjective map between them is a covering projection.
The use of the same terminology in representation theory is explained by the fact that often a Schur cover of a perfect group of Lie type corresponds to the topological covering group of its complex analogue, an example is given by $\SL_n(\F)$ and $\PSL_n(\F)$.
Noteworthy, we establish a new relation between covering groups and covering spaces by considering the cellular and simplicial complexes properly related to Theorem \ref{Satz:main}.
In fact, in the classical proof of the formula, that is when $F$ is free, we identify $G$ with the group of deck transformations associated to some covering projection  $p:Y\to X$.
Here $Y$ is the Cayley graph associated with the presentation, and $X$ is a bouquet of circumferences, whose universal cover is a tree $\tilde X$.
Therefore, $\pi_1X\simeq F$, $\pi_1 Y\simeq R$ and $\Aut(Y/X)\simeq G$ \cite{Brown}.
In the situation of Theorem \ref{Satz:main}, we have a free product 
\[F=\gen{\ f_1,\ldots,f_d\ |\ f_1^{m_1},\ldots, f_d^{m_d}\ }\]
and so we attach 2-cells corresponding to the relations $f_i^{m_i}=1$ for $i=1,\ldots,d$ to the above graph and obtain an analogue for $Y$, which is a two dimensional complex.
On the other hand, also the cover $E=F/[F,R]$ is presented by $F$, and so we associates a complex $W$ to $E$.
In this way, we obtain a new elegant correspondence between an algebraic and a geometric notion (Corollary \ref{Satz:hyperiodic and local homeomorphism}).
\begin{corollary}\label{Satz:corollary in topology}
The group homomorphism $\pi:E\to G$ induces a continuous map $p:W\to Y$ between cellular complexes.
Such $p$ is a covering projection precisely when $\pi$ preserves the order of the generators, and in this case $E$ has minimal exponent among the covers of $G$.
\end{corollary}
On the other hand, covering spaces were first introduced to study Riemann surfaces by F.~Klein, L.~Fuchs, A.~M\"obius, and H.~Poincar\'e, where they permit a deep use of group theory in analogy with Galois theory for fields extension  \cite{AhlforsSario}.
Compact Riemann surfaces have finite group of isometries, and representation theory continues to produce important results, for instance with the work of G.A.~Jones, M.W.~Liebeck and A.~Shalev \cite{Jones,Liebeck}.
Going in the other direction, T.~Breuer has studied the representations arising from finite groups acting on Riemann surfaces \cite{Breuer}.
 In addition, the Kleinian and Fuchsian groups, which are discrete subgroups of $\PSL_2(\C)$ classifying the Riemann surfaces,  can be studied by purely combinatorial means, and forgetting the complex analytic structure provides discrete geometries such as regular tilings and polytopes, inline with the nature of homology of finite groups which is based on simplicial complexes \cite{Conway,Coxeter,Magnus}.
 The homology of Fuchsian groups, and generalized triangle groups, has been computed by S.~J.~Patterson, G.~Ellis and G.~Williams \cite{EllisWilliams,Patterson}, moreover, T.W.Tucker has studied finite groups acting on combinatorial surfaces in close analogy with the conformal actions \cite{Tucker}, and it is worth to say that the Schur multiplier has important applications in combinatorial group theory \cite{Baumslag,LyndonSchupp}.
 In this respect, by imposing a further relation to the above free products, we obtain a Fuchsian group generated by elliptic elements
\[\Delta=\gen{\ y_1,\ldots,y_{d+1}\ |\ y_1^{m_1}\ ,\ \ldots\ ,\ y_{d+1}^{m_{d+1}}\ ,\ y_1\cdots y_d\ }\]
where $m_1,\ldots,m_{d+1}$ are integers greater than $1$.
Therefore, we consider an \emph{elliptic presentation} of a finite group $G$, that is a sequence $1\to S\to\Delta\to G\to 1$ satisfying the condition that $o(\pi y_i)=m_i$ for all $i=1,\ldots,d+1$, and this affords a finite central extension $D=\Delta/[S,\Delta]$ of $G$.
There is an obstruction for $D$ being a cover, since $H_2\Delta$ is non necessarily trivial and so the first term in \eqref{equation:LHS short exact sequence} does not vanish.
Nonetheless, we prove that an elliptic cover always exists and, interestingly, the relevant topological spaces are now simplicial oriented compact surfaces.
\begin{theorem}
For any elliptic presentation of a finite group, the homomorphism  $\Delta/[S,\Delta]\to G$ induces a covering projection between compact oriented surfaces.
 Moreover, an elliptic cover always exists.
\end{theorem}
We prove this result in a more general form (the join of Theorem \ref{Satz:existence of elliptic covers} and \ref{Satz:elliptic covers and topology}), in fact we describe the simplicial structure underlying the compact Riemann surfaces, and we obtain some insights for classical results in terms of discrete geometry and combinatorial group theory.
The manuscript is structured as follows.
In section \ref{section:background}, we recall some elementary aspects of Schur's theory on projective representations and the unitary cover, referring to \cite{Isaacs,Sambonet2015,Sambonet2017}.
In section \ref{section:Hopf finite cyclic}, we study the Schur--Hopf formula with respect to free products of finite cyclic groups, and in section \ref{section:topology} we address the case of Fuchsian groups generated by elliptic elements together with the topology underlying both situations.

\section{Background}\label{section:background}
The first way to study a projective representation $\varphi:G\to\PGL_n(\C)$ is to associate it with an element of the \emph{Schur multiplier} as follows.
A \emph{section} for $\varphi$ is a map $\tau:G\to\GL_n(\C)$ making the diagram
\[\xymatrix{
&G\ar[d]^\varphi\ar[dl]_{\tau}\\
\GL_n(\C)\ar[r]&\PGL_n(\C)
}\]
commutative.
The failure for $\tau$ being a homomorphism is encoded in a function $\alpha:G\times G\to\C^\times$ defined by the equality
$\tau(g)\tau(h)=\tau(gh)\alpha(g,h)$.
Clearly not any function can be found in this way, for instance, by computing $\tau(x)\tau(y)\tau(z)$, the associativity of $\GL_n(\C)$ implies that $\alpha$ satisfies the identity \[\alpha(x,y)\alpha(xy,z)=\alpha(x,yz)\alpha(y,z)\ .\]
The functions satisfying this identity are the \emph{cocycles}, and they constitute a group $Z^2G=Z^2(G,\C^\times)$ under point-wise multiplication.
It can be proved that every cocycle arises from some projective representation.
On the other hand, by changing the section $\tau$ for another $\tau'$, we must have $\tau'(g)=\tau(g)\zeta(g)$ for some function $\zeta:G\to\C^\times$.
In $Z^2G$ this corresponds to multiplication of $\alpha$ by the \emph{coboundary} $\delta\zeta$, which is defined by \[\delta\zeta(g,h)=\zeta(g)\zeta(h)\inv{\zeta(gh)}\ .\]
To forget the choice of section we factor the subgroup $B^2G=B^2(G,\C^\times)$ of coboundaries.
Therefore,  the fundamental invariant that classifies the projective representations of $G$ is $H^2(G,\C^\times)=Z^2(G,\C^\times)/B^2(G,\C^\times)$, this is the \emph{Schur multiplier} which we denote $H^2 G$.

The second way to study a projective representation is to solve the \emph{lifting problem} for $\varphi$, namely, to determine a finite extension $1\to A\to E\to G\to 1$, together with an ordinary representation $\hat\varphi$ which makes the diagram
\[\xymatrix{
E\ar[r]^\pi\ar[d]_{\hat\varphi}&G\ar[d]^\varphi\\
\GL_n(\C)\ar[r]&\PGL_n(\C)
}\]
commutative.
Here the homomorphism $\pi$ is surjective and, since $\PGL_n(\C)$ is the quotient of $\GL_n(\C)$ over its center $\C^\times$,
there is no loss by assuming that $A=\ker\pi$ is central in $E$, that is $[E,A]=1$.
As above, a section $\sigma:E\ot G$ for $\pi$ determines a cocycle $\gamma$ in $Z^2(G,A)$, and thus $E$ determines a coclass $[\gamma]$ in $H^2(G,A)$.
In this respect, there is a homomorphism
\begin{equation}\label{eq:the standard map}
 \eta:\check{A}=\Hom(A,\C^\times) \to H^2G\ \ ,\ \ \eta(\lambda)=[\lambda\circ\gamma]
\end{equation}
the \emph{standard map}, which address the lifting problem: the lifting $\hat\varphi$ exists if and only if the coclass $[\alpha]$ afforded by $\varphi$ belongs to the image of $\eta$.
In turn
\[\eta(\check{A})\simeq[E,E]\cap A\ .\]
We observe that this ammounts to prove that $\ker\eta=([E,E]\cap A)^\perp$ since, for a generic subgroup $B$ of $A$, there is a natural duality isomomorphism $\check B\simeq\check{A}/B^\perp$ where $B^\perp=\{\lambda\in A\ |\ B\leq\ker\lambda\}$ (see \cite{Isaacs}).
This gives a complete picture of the algebraic notion of cover.
\begin{definition}
A \emph{cover} of a finite group $G$ is a finite central extension \[1\to A\to E\to G\to 1\ \ ,\ \ [E,A]=1\]
satisfying the following equivalent conditions:
\begin{enumerate}[i)]
 \item every projective representation of $G$ can be lifted to $E$
 \item the standard map $\eta$ is surjective
 \item the subgroup $[E,E]\cap A$ is isomorphic to $H^2G$
\end{enumerate}
If the standard map $\eta$ is an isomorphism then $E$ is a \emph{Schur cover}, in this case $A$ is a subgroup of $[E,E]$ isomorphic to $H^2G$.
\end{definition}
There is a way somehow to invert the above process.
The \emph{Schur construction}
associates any given finite subgroup $S$ of $Z^2G$ to a finite central extension 
\[1\to\check S\to\check S\propto G\to G\to 1\ ,\]
in such a way that the projective representations of $G$ which can be lifted to $\check S\propto G$ are those providing a coclass represented in $S$
(we provide few more details at the end of this section).
In particular, having that $B^2G$ is a divisible finite index subgroup of $Z^2G$ and as such it is complemented, we can write $Z^2G=B^2G\oplus J$ to obtain a
the Schur cover $\check J\propto G$.
Thus the lifting problem always admits a positive solution, and this is the fundamental theorem of the whole theory unifying projective and ordinary representations.
\begin{theorem}[Schur 1904]
\label{Satz:fundamental theorem of projective representations}
 Any finite group admits a Schur cover.
\end{theorem}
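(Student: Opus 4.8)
The plan is to obtain a Schur cover directly from the Schur construction, applied to a suitably chosen finite subgroup of $Z^2G$. The construction attaches to a finite $S\le Z^2G$ the central extension $1\to\check S\to\check S\propto G\to G\to 1$, in which the liftable projective representations are precisely those whose coclass is represented by a cocycle lying in $S$. Thus, if I can find a finite subgroup $J\le Z^2G$ that the quotient map $Z^2G\to H^2G$ carries \emph{onto} $H^2G$, then every coclass will be represented in $J$ and $E=\check J\propto G$ will be a cover; the natural candidate is a finite complement to $B^2G$, so that $Z^2G=B^2G\oplus J$ with $J\simeq H^2G$. First I would therefore reduce the whole statement to producing such a complement.

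To build it, I would verify the two properties that make $B^2G$ split off. For divisibility: $B^2G$ is the image under the coboundary map $\zeta\mapsto\delta\zeta$ of the group of all functions $G\to\C^\times$, and this latter group is divisible because $\C^\times$ is; the homomorphic image of a divisible group is divisible, so $B^2G$ is divisible. For finite index: the multiplier $H^2(G,\C^\times)=Z^2G/B^2G$ is finite, since $\C^\times$ is divisible with only finitely many $|G|$-th roots of unity. Now divisibility does the decisive work: a divisible abelian group is an injective $\Z$-module, and an injective subgroup is a direct summand, so $Z^2G=B^2G\oplus J$; the finite index then forces $J\simeq Z^2G/B^2G=H^2G$ to be finite, as required.

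It remains to check that $E=\check J\propto G$ is not merely a cover but a Schur cover. That it is a cover is immediate: every coclass in $H^2G$ is represented by (exactly one) cocycle in $J$, so every projective representation lifts, which is condition (i). For the Schur property I would argue by orders rather than by unwinding the cocycle of the construction. Writing $A=\check J$, finiteness of $J$ gives $|A|=|J|=|H^2G|$ and hence $|\check A|=|A|=|H^2G|$; the standard map $\eta\colon\check A\to H^2G$ is surjective because $E$ is a cover (condition (ii)), so it is a surjection between finite groups of equal order and therefore an isomorphism. Thus $E$ is a Schur cover.

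I expect the complementation step to be the crux, and within it the finiteness of $H^2(G,\C^\times)$ is the only substantial arithmetic input; everything downstream is then formal. The homological point that makes the construction clean is that divisibility upgrades ``finite index'' to an actual direct-sum splitting via injectivity of divisible groups, so that a genuinely finite $J$, the only kind of subgroup the Schur construction can accept, is available.
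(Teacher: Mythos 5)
Your proposal is correct and follows essentially the same route as the paper: the paper also obtains the Schur cover as $\check J\propto G$ after splitting $Z^2G=B^2G\oplus J$ using that $B^2G$ is a divisible subgroup of finite index. You have merely filled in the details the paper leaves implicit (injectivity of divisible groups, finiteness of $H^2G$, and the order count showing $\eta$ is an isomorphism), all of which are sound.
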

By definition, the Schur covers have minimal order among the covers.
The fact that a group possibly has many non-isomorphic Schur covers follows by the choice of the complement $J$.
With this theorem, the formula can be proved using the universal property of $F$, once we fix a cover $E$, the homomorphism $E\to G$ lifts to a homomorphism $F\to E$, mapping $[F,F]\cap R$ onto $[E,E]\cap A\simeq H^2G$.
\begin{theorem}[Schur 1907]\label{Satz:Schur formula}
 Any presentation $1\to R\to F\to G\to 1$ of a finite group $G$ by a free group $F$ provides a isomorphism \[H^2G\simeq [F,F]\cap R/[R,F]\ .\]
 Moreover, $R/[F,F]\cap R$ is a free abelian group.
\end{theorem}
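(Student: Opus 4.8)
The plan is to obtain the formula as the degenerate case of the five-term exact sequence of low-degree terms attached to the presentation, the point being that the freeness of $F$ annihilates the relevant second homology. First I would pass to integral homology with trivial coefficients, where the Lyndon--Hochschild--Serre spectral sequence of the extension $1\to R\to F\to G\to 1$ yields the exact sequence
\begin{equation*}
H_2(F,\Z)\to H_2(G,\Z)\to \big(R/[R,R]\big)_G\xrightarrow{\ \theta\ } F/[F,F]\to G/[G,G]\to 0 .
\end{equation*}
Because $F$ is free it admits a free resolution of length one, so $H_2(F,\Z)=0$; this single structural fact is what makes the whole computation collapse, and it leaves $H_2(G,\Z)$ sitting as the kernel of $\theta$. (If one prefers to avoid the spectral sequence, the same five-term sequence can be extracted by hand from a partial free resolution of $\Z$ over $\Z G$ built from the presentation.)

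Next I would pin down the two middle terms. The coinvariants $\big(R/[R,R]\big)_G$ I would identify with $R/[R,F]$, using that $G$ acts by conjugation lifted through $F$, so passing to coinvariants amounts to further factoring out all commutators $[f,r]$ with $f\in F$ and $r\in R$, and noting $[R,R]\subseteq[R,F]$. I would then check that $\theta$ is nothing but the map induced by the inclusion $R\hookrightarrow F$ followed by abelianization, so that a class $r[R,F]$ is sent to $r[F,F]$. Its kernel therefore consists exactly of those classes with $r\in[F,F]\cap R$, i.e. $\ker\theta=([F,F]\cap R)/[R,F]$, and exactness of the sequence delivers $H_2(G,\Z)\cong([F,F]\cap R)/[R,F]$.

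To recast this in the stated form $H^2G\cong([F,F]\cap R)/[R,F]$ I would invoke the comparison between homology and the Schur multiplier for finite groups already alluded to in the introduction: since $\C^\times$ is divisible, hence injective as a $\Z$-module, the universal coefficient theorem forces the $\mathrm{Ext}$-term to vanish and gives $H^2(G,\C^\times)\cong\Hom\big(H_2(G,\Z),\C^\times\big)$; as $G$ is finite the group $H_2(G,\Z)$ is finite abelian and thus (non-canonically) isomorphic to its own character group, whence $H^2G\cong([F,F]\cap R)/[R,F]$. The last assertion is then immediate from the same sequence: the image of $\theta$ is $R[F,F]/[F,F]\cong R/([F,F]\cap R)$, a subgroup of the free abelian group $F/[F,F]$, and a subgroup of a free abelian group is free abelian.

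The main obstacle I anticipate is not any isolated hard step but the bookkeeping concentrated in the second paragraph: establishing the precise shape of the five-term sequence, correctly computing the coinvariants $\big(R/[R,R]\big)_G=R/[R,F]$, and verifying that the connecting homomorphism is genuinely induced by inclusion rather than by some twisted variant. Once these identifications are secured, the kernel computation, the passage to the multiplier via divisibility of $\C^\times$, and the freeness argument all follow with no further difficulty.
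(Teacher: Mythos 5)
Your argument is correct, but it is not the route the paper takes at the point where Theorem \ref{Satz:Schur formula} is stated. There the formula is read off from the universal property of free groups combined with the prior existence of a finite cover (Theorem \ref{Satz:fundamental theorem of projective representations}): one lifts $E\to G$ to a homomorphism $F\to E$, deduces that $F/[R,F]$ is a central extension of $G$ whose kernel contains the desired copy of $H^2G$, and leans on Schur's representation-theoretic analysis for the precise identification. What you do instead is exactly the derivation the paper reserves for the Hopf formula (Theorem \ref{Satz:Hopf formula 1}) and its generalization: the five-term sequence \eqref{equation:LHS short exact sequence} with $H_2F=0$, the identification $(H_1R)_G\simeq R/[R,F]$, the computation $\ker\theta=([F,F]\cap R)/[R,F]$, and then the universal coefficient isomorphism $H^2(G,\C^\times)\simeq\Hom(H_2G,\C^\times)$ from divisibility of $\C^\times$ together with finiteness of $H_2G$ to drop the $\Hom$. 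Each approach buys something: yours is self-contained, avoids invoking the 1904 existence theorem, and delivers the ``free abelian'' clause for free, since the image of $\theta$ is $R[F,F]/[F,F]\simeq R/([F,F]\cap R)$ inside the free abelian group $F/[F,F]$; the paper's sketch, by contrast, exhibits $F/[R,F]$ as a concrete universal (infinite) central extension, which is the object the rest of the paper builds on. The only point worth flagging in your write-up is that the final step uses a non-canonical isomorphism $\Hom(H_2G,\C^\times)\simeq H_2G$, so the isomorphism in the statement is not natural --- which is consistent with how the paper phrases it.
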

In order to study the exponent of the multiplier $H^2G$, we may look at how the powers behave in some cover of $G$. 
More in general, we observe that in any central extension $1\to A\to E\to G\to 1$,
if the section $\sigma:E\ot G$ afford the cocycle $\gamma$ we have
\[\sigma(g)^{o(g)}=\prod_{j=0}^{o(g)-1}\gamma(g,g^j)\]
for all element $g$ in the group $G$.
We come to the following notion \cite{Sambonet2015}.
\begin{definition}[2015]
 The \emph{unitary cover} of a finite group $G$ is \[\GU G=(\ZU G)\check\ \propto G\]
 where $\ZU G$ denotes the group of the cocycles satisfying the identity
 \[\prod_{j=0}^{o(g)-1}\alpha(g,g^j)=1\ \ ,\ \ \forall g\in G\]
 and are called the \emph{unitary cocycles}.
\end{definition}
In fact, since $\ZU G$ is finite the Schur construction is well defined, and since $\ZU G$ represents the whole multiplier the result is a cover.
Moreover, we see that $\GU G$ is defined by an identity and not by a choice.
Remarkably:
\begin{theorem}[2015]\label{Satz:the unitary cover has minimal exponent}
The unitary cover has minimal exponent.
\end{theorem}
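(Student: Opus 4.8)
The plan is to show that the unitary cover realizes the exponent $\lcm(\exp G,\exp\ZU G)$ and that this value divides the exponent of every cover; since $\GU G$ is already known to be a cover, only the exponent is at issue. First I would compute $\exp\GU G$. In the Schur construction the operative cocycle is the evaluation $\gamma(g,h)\in(\ZU G)\check{\ }$ with $\gamma(g,h)(\alpha)=\alpha(g,h)$, so the quantity appearing in the displayed power identity becomes the character
\[
\Big(\prod_{j=0}^{o(g)-1}\gamma(g,g^j)\Big)(\alpha)=\prod_{j=0}^{o(g)-1}\alpha(g,g^j)=1
\]
for every $\alpha\in\ZU G$, the last equality being precisely the defining property of the unitary cocycles. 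Hence $\sigma(g)^{o(g)}=1$ in $\GU G$: the canonical lift of each $g$ already attains the least conceivable order $o(g)$. Writing a general element as $(\lambda,g)$ with $\lambda\in(\ZU G)\check{\ }$, the same identity gives $(\lambda,g)^{o(g)}=(\lambda^{o(g)},1)$ and hence $\mathrm{ord}(\lambda,g)=\lcm(o(g),\mathrm{ord}\,\lambda)$; taking the least common multiple over all $g$ and $\lambda$ yields $\exp\GU G=\lcm(\exp G,\exp\ZU G)$.

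It then remains to prove that this value divides $\exp E$ for an arbitrary cover $1\to A\to E\to G\to1$ with cocycle $\gamma$ and section $\sigma$. Since $G$ is a quotient of $E$ we have $\exp G\mid\exp E$, so the substance is the divisibility $\exp\ZU G\mid\exp E$, that is, $\alpha^{\exp E}=1$ for every unitary cocycle $\alpha$. Because $E$ is a cover the standard map is surjective, so $[\alpha]=[\lambda\circ\gamma]$ for some $\lambda\in\check A$, say $\alpha=(\lambda\circ\gamma)\,\delta\zeta$. The factor $\lambda\circ\gamma$ is harmless, its order dividing $\mathrm{ord}\,\lambda\mid\exp A\mid\exp E$; the delicate factor is the coboundary $\delta\zeta$.

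Here lies the \emph{main obstacle}: controlling the correcting function $\zeta$. The idea is to play the power identity back inside $E$ itself. The lift $\sigma(g)$ has order $o(g)\cdot\mathrm{ord}_A(c_g)$ with $c_g=\prod_j\gamma(g,g^j)$, and this order divides $\exp E$, so $o(g)\cdot\mathrm{ord}_A(c_g)\mid\exp E$. Matching the unitarity of $\alpha$ against that identity forces $\zeta(g)^{o(g)}=\lambda(c_g)^{-1}$, whose order $d_g$ divides $\mathrm{ord}_A(c_g)$; a short arithmetic argument then bounds $\mathrm{ord}\,\zeta(g)$ by $o(g)\,d_g$, which divides $o(g)\cdot\mathrm{ord}_A(c_g)\mid\exp E$. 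Consequently $\mathrm{ord}\,\delta\zeta\mid\exp E$ and hence $\alpha^{\exp E}=1$, giving $\exp\ZU G\mid\exp E$. Combining the two halves, $\exp\GU G=\lcm(\exp G,\exp\ZU G)$ divides $\exp E$ for every cover $E$, so the unitary cover has minimal exponent. I expect the coboundary estimate to be the only genuine difficulty; the rest is bookkeeping with the single power identity.
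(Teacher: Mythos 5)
Your proof is correct. Note that the paper itself offers no proof of this statement --- it is quoted as a result of the 2015 paper \cite{Sambonet2015} --- and your argument reconstructs essentially the argument given there: the power identity $\sigma(g)^{o(g)}=\prod_{j}\gamma(g,g^j)$ evaluated in the Schur construction gives $\exp\GU G=\lcm(\exp G,\exp\ZU G)$, and the same identity, matched against the unitarity of $\alpha=(\lambda\circ\gamma)\delta\zeta$, forces $\zeta(g)^{o(g)}=\lambda(c_g)^{-1}$ and hence $\mathrm{ord}\,\zeta(g)\mid o(g)\cdot\mathrm{ord}_A(c_g)=\mathrm{ord}\,\sigma(g)\mid\exp E$, which is exactly the coboundary control needed. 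Your telescoping computation $\prod_{j=0}^{o(g)-1}\delta\zeta(g,g^j)=\zeta(g)^{o(g)}$ and the divisibility bookkeeping both check out, so the argument is complete.
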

In addition, the unitary condition is preserved by restriction to subgroups and inflation from quotients and in turn the unitary cover presents a functorial behavior.
This fact provides a tool to address the exponent problem by having that
\[\exp\GU G\ |\ \exp\GU(N)\cdot\exp\GU(G/N)\]
for any normal subgroup $N$ of $G$.
Moreover, we have that
\[\exp\GU G=\lcm\{\exp G,\ \exp\ZU G\}\]
this allows computation by looking at each value $\alpha(g,h)$ separately, and an interesting consequence is that
\begin{equation}\label{eq:unitary cover and two generated subgroups}
 \exp\GU G=\lcm\{\exp\GU(\gen{g,h})\ |\ g,h\in G\}
\end{equation}
and the exponent problem is controlled by the two generated subgroups \cite{Sambonet2017}.
% Let $A$ be a finite abelian group, and $\check{A}=\Hom(A,\C^\times)$.
% The groups $A$ and $\check A$ are isomorphic, although this isomorphism requires the choice of a cyclic decomposition, on the other hand, the double dual $\checkk{A}=\Hom(\check{A},\C^\times)$
% comes with a natural isomorphism
%  $A\simeq\checkk{A}$, mapping $a\mapsto\{\lambda\mapsto\lambda(a)\}$.

It is time for us to describe the Schur construction with some more details.
The \emph{Schur construction} associates a given finite subgroup $S$ of $Z^2(G)$ to the finite central extension
$1\to\check S\to\check S\propto G\to G\to 1$
whose underling set is $G\times \check S$, and multiplication is given by the rule \[(g,\vartheta)\cdot(h,\psi)=(gh,\omega(g,h)\vartheta\psi)\] where
\begin{equation}\label{eq:multiplication cocycle in the Schur construction}
 \omega(g,h)=\{\alpha\mapsto\alpha(g,h)\}\in\check S=\Hom(S,\C^\times)\ .
\end{equation}
This relates to the standard map as follows.
Reading \eqref{eq:the standard map}, we see that $\eta$ is the composite of the homomorphism
\begin{equation}\label{eq:the standard map at the level of cocycles}
 \dot\eta:\check{A}\to Z^2G\ \ ,\ \ \dot\eta(\lambda)=\lambda\circ\gamma
\end{equation}
with the natural projection from $Z^2G$ to $H^2G$, here $\gamma$ denotes the cocycle associated to a section $\sigma:E\ot G$.
We observe that with respect to $\check{S}\propto G$ and the canonical section $\upsilon:(g,1_S)\mapsfrom g$, the map $\dot\eta$ is the natural isomorphism $\checkk{S}\simeq S$, and this fact illustrate our previous claim that the image of $\eta$ are precisely the coclasses represented in $S$.
On the other hand, for a finite central extension $1\to A\to E\to G\to 1$, with section $\sigma$, cocycle $\gamma$, and associated map $\dot\eta$, we may apply the Schur construction to $
\dot\eta(A\check\ )$ to get a new extension.
The cocycle $\omega$ defining the multiplication in $\dot\eta(A\check\ )\check\ \propto G$ is related to $\gamma$ via the formula
\begin{equation}\label{eq:duality standard}
 \omega(g,h)(\dot\eta(\lambda))=\lambda(\gamma(g,h))
\end{equation} for all $g$ and $h$ in $G$, and $\lambda$ in $\check{A}$.
We can now establish some natural properties of the Schur construction.
\begin{lemma}\label{Satz:Schur construction and generators}
Let $G$ be a finite group, $S$ and $T$ be finite subgroups of $Z^2G$ with $T\leq S$, and $1\to A\to E\to G\to 1$ be a finite central extension. Then
\begin{enumerate}[i)]
 \item $\check S\propto G=\gen{\upsilon{(G)}}$, for the canonical section $\upsilon:(g,1_S)\mapsfrom g$.
 \item $\check T\propto G\simeq(\check S\propto G)/T^\perp$, where $T^\perp=\{(1,\lambda)\ |\ \lambda\in\check S\ ,\ T\leq\ker\lambda\}$.
 \item $\gen{\sigma(G)}\simeq \dot\eta(A\check\ )\check\ \propto G$ for any section $\sigma:E\ot G$ affording $\dot\eta:\check{A}\to Z^2G$.
\end{enumerate}
Moreover, the above isomorphism are natural.
\end{lemma}
\begin{proof}
i)
For any subset $\Lambda$ of $\check{A}$ we have $\gen{\Lambda}=K^\perp$ where $K=\bigcap_{\lambda\in\Lambda}\ker\lambda$, consequently $\Lambda$ generates $\check{A}$ if and only if $K=1$.
We observe that
\[(1,\omega(g,h))=(1,1_S)\cdot(g,1_S)\cdot(h,1_S)\cdot\inv{(gh,1_S)}\in\gen{\upsilon(G)}\] for all $g$ and $h$, since $\ker\omega(g,h)=\{\alpha\in S\ |\ \alpha(g,h)=1\}$ then
\[\bigcap\left\{\ker\omega(g,h)\ |\ (g,h)\in G\times G\right\}=1\]
so that $\check S=\gen{\ \omega(g,h)\ |\ (g,h)\in G\times G\ }$.
ii) follows immediately by i) and the duality isomorphism $\check{S}/T^\perp\simeq\check{T}$.
iii) We know by i) that the generic element of $\dot\eta(A\check\ )\check\ \propto G$ can be written as
$(g,\prod_{i}\omega(g_i,h_i))$ for suitable $g_i,h_i$ and $g$ in $G$.
On the other hand, the generic element of $\gen{\sigma(G)}$ is $\sigma(g)\cdot\prod_{i}\gamma(g_i,h_i)$, so we define \[\varphi:\sigma(g)\cdot\prod_{i}\gamma(g_i,h_i)\mapsto(g,\prod_{i}\omega(g_i,h_i))\ .\]
Observe that the map $\varphi$ is well defined
by \eqref{eq:duality standard}, and it is enough to check the definition of the product in $E$ and $\check{S}\propto G$ to see that $\varphi$ is an isomorphism.
\end{proof}

\section{Periodic and hyperiodic covers}\label{section:Hopf finite cyclic}
We write the Hopf formula in one of its generalized versions, allowing a presentation by a free product of arbitrary cyclic groups.
So we let
\[F=\Ast_{i=1}^d\Z_{m_i}\ \ ,\ \ \Z_{m_i}=\gen{f_i}\ ,\]
and call the order of the free generators $m_i=o(f_i)$ the \emph{periods} of $F$.
By a standard Mayer-Vietoris argument we have that $H_2F=0$, by reading \eqref{equation:LHS short exact sequence} we obtain the desired version of the formula.
\begin{theorem}[Periodic Schur--Hopf formula]\label{Satz:Hopf formula}
Let $1\to R\to F\to G\to 1$ be a group presentation by a free product of cyclic groups.
Then
\[H_2G\simeq[F,F]\cap R/[R,F]\ .\]
\end{theorem}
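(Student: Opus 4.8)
The plan is to obtain the formula directly from the five-term exact sequence \eqref{equation:LHS short exact sequence}, for which the only substantial input is the vanishing of the second homology of the free product $F=\Ast_{i=1}^d\Z_{m_i}$.

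First I would establish that $H_2F=0$. Since $F$ is a free product, a $K(F,1)$ may be taken as the wedge $\bigvee_{i=1}^d K(\Z_{m_i},1)$, so that reduced homology is additive and $H_2F\simeq\bigoplus_{i=1}^d H_2(\Z_{m_i})$; this is the Mayer--Vietoris argument alluded to in the text. It then remains to see that each summand vanishes. A finite cyclic group admits a two-periodic free resolution of $\Z$ over its integral group ring, whence its even-degree homology is trivial and in particular $H_2(\Z_{m_i})=0$, while the infinite cyclic group $\Z$ is free and so also has trivial second homology. Combining these, $H_2F=0$.

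With this in hand I would read \eqref{equation:LHS short exact sequence} and substitute $H_2F=0$, truncating the five-term sequence to an exact
\[0\to H_2G\to(H_1R)_G\to H_1F\ ,\]
so that the injection $H_2G\hookrightarrow(H_1R)_G$ identifies $H_2G$ with the kernel of the map $(H_1R)_G\to H_1F$. To finish, I would translate this kernel into the stated expression using the isomorphisms $(H_1R)_G\simeq R/[R,F]$ (induced by the conjugation action of $F$ on $R$) and $H_1F\simeq F/[F,F]$ recalled above. Under these the map is the one induced by the inclusion $R\hookrightarrow F$, namely $r[R,F]\mapsto r[F,F]$, and its kernel consists of the cosets $r[R,F]$ with $r\in[F,F]$, that is, the subgroup $[F,F]\cap R/[R,F]$ (this quotient makes sense because $[R,F]\subseteq[F,F]\cap R$, as $R$ is normal in $F$ and $R\subseteq F$). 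Thus $H_2G\simeq[F,F]\cap R/[R,F]$.

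I expect the only genuine obstacle to be the computation $H_2F=0$, since this is where the hypothesis that $F$ is a free product of cyclic groups is actually used, whereas everything after it is bookkeeping inside the exact sequence. The delicate points there are to apply the wedge decomposition in the correct degree—additivity of reduced homology gives the direct-sum splitting $H_2F\simeq\bigoplus_i H_2(\Z_{m_i})$ in degree two—and to invoke the periodicity of the resolution of a finite cyclic group, which is precisely what kills the even-degree homology and makes each summand vanish.
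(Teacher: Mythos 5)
Your proposal is correct and follows essentially the same route as the paper: the paper likewise reduces everything to the vanishing $H_2F=0$ for a free product of cyclic groups (its ``standard Mayer--Vietoris argument,'' which your wedge-of-$K(\Z_{m_i},1)$'s decomposition together with the periodic resolution of a finite cyclic group makes explicit) and then reads the result off the five-term sequence \eqref{equation:LHS short exact sequence} via the identifications $(H_1R)_G\simeq R/[R,F]$ and $H_1F\simeq F/[F,F]$. Your filling-in of the kernel computation is exactly the bookkeeping the paper leaves implicit.
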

In view of this formula, it is natural to consider the group $E=F/[R,F]$ together with the filtration $[R,F]\leq[F,F]\cap R\leq R\leq F$ having $G$ and $H_2G$ among its factors.
If some of the periods are infinite, just as in the classical situation, then $F/[F,F]$ is an infinite group and at the more so $E$ is such.
Therefore, focusing on finite groups it is of interest to consider the case in which all of the periods are finite.
\begin{definition}
 A group presentation $1\to R\to F\to G\to 1$ where $G$ is finite and $F$ is a free product of finite cyclic groups is a \emph{periodic presentation}, and the group $E=F/[R,F]$ is the \emph{periodic cover} afforded by the presentation.
\end{definition}
The fundamental fact is that the periodic covers are precisely the finite covers which arise from presentations by a free product of cyclic groups.
\begin{theorem}\label{Satz:Hopf properties of the covers}
Let $1\to R\to F\to G\to 1$ be a group presentation by a free product of cyclic groups.
Then $E=F/[R,F]$ is a finite cover of $G$ if and only if the presentation is periodic, in which case 
$|E|=|F:[F,F]|\cdot |[G,G]|\cdot|H_2G|$.
Moreover, $E$ is a Schur cover of $G$ if and only if $R\leq[F,F]$, and $E$ is a $p$-group if and only if $G$ is such and all the periods are $p$-powers.
\end{theorem}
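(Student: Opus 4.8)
The plan is to extract everything from the four-term filtration $[R,F]\leq[F,F]\cap R\leq R\leq F$ and the two short exact sequences it induces. Since $[R,F]\leq R\cap[F,F]$, the image $A=R/[R,F]$ is central in $E=F/[R,F]$ and $E/A\simeq G$, so $1\to A\to E\to G\to 1$ is a central extension (here $G$ is finite, which is part of being periodic and is forced by $E$ being a cover). The generalized Hopf formula (Theorem \ref{Satz:Hopf formula}) identifies the subgroup $([F,F]\cap R)/[R,F]$ of $A$ with $H_2G$, and the isomorphism theorem gives $A/H_2G\simeq R/([F,F]\cap R)\simeq R[F,F]/[F,F]$; I write $\bar R$ for this last group, a subgroup of $F/[F,F]=\bigoplus_{i=1}^d\Z_{m_i}$. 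Abelianizing the projection $F\to G$ yields the exact sequence $1\to\bar R\to F/[F,F]\to G/[G,G]\to 1$, and these two sequences drive every remaining claim.

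First I would settle finiteness. As $H_2G$ is always finite, $A$ is finite if and only if $\bar R$ is finite. If the presentation is periodic then $F/[F,F]$ is finite, hence so is its subgroup $\bar R$, hence so are $A$ and $E$. Conversely, if $E$ is finite then $\bar R$ is finite, and since $G/[G,G]$ is finite the sequence $1\to\bar R\to F/[F,F]\to G/[G,G]\to 1$ forces $F/[F,F]=\bigoplus\Z_{m_i}$ to be finite, so every period is finite. Because $[E,E]=[F,F]/[R,F]$ gives $[E,E]\cap A=([F,F]\cap R)/[R,F]\simeq H_2G$ by the Hopf formula regardless of finiteness, condition iii) in the definition of cover is automatic; thus $E$ is a finite cover exactly when it is finite, that is, exactly when the presentation is periodic.

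Next I would read off the order from $|E|=|A|\cdot|G|$: the two exact sequences give $|A|=|H_2G|\cdot|\bar R|$ and $|\bar R|=|F:[F,F]|\cdot|[G,G]|/|G|$, and multiplying produces $|E|=|F:[F,F]|\cdot|[G,G]|\cdot|H_2G|$. For the Schur criterion I would note that $A\subseteq[E,E]$ if and only if $R\subseteq[F,F]$ (both subgroups contain $[R,F]$); when this holds, $A=[E,E]\cap A\simeq H_2G$, so the standard map $\eta$ is a surjection between groups of equal order, hence an isomorphism and $E$ is a Schur cover, while the converse is the property of Schur covers recorded in the definition. Finally, the $p$-group statement follows from the order formula: if $E$ is a $p$-group then its quotients $G$ and $F/[F,F]=\bigoplus\Z_{m_i}$ are $p$-groups, so every period is a $p$-power; conversely, if $G$ is a $p$-group and all periods are $p$-powers, then $|F:[F,F]|$, $|[G,G]|$ and $|H_2G|$ are each powers of $p$ (using that the multiplier of a finite $p$-group is a $p$-group), whence $|E|$ is a power of $p$.

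The main obstacle is not any single computation but keeping the bookkeeping honest across objects that may a priori be infinite: the entire argument rests on isolating $\bar R$ inside $F/[F,F]$ as the only channel through which infiniteness can enter $E$, and on the clean decomposition of $A$ as an extension of $\bar R$ by the always-finite $H_2G$. Once this dichotomy is in place, the commutator computation for $[E,E]$, the multiplicativity of orders, and the $p$-local counting are all routine.
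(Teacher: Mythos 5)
Your proof is correct and follows essentially the same route as the paper: the filtration $[R,F]\leq[F,F]\cap R\leq R\leq F$, the Noether isomorphism $R/([F,F]\cap R)\simeq R[F,F]/[F,F]$, the resulting index identity $|F:[F,F]|=|G:[G,G]|\cdot|R:[F,F]\cap R|$, and the order formula for the $p$-group criterion. You simply make explicit several steps the paper leaves implicit, notably the finiteness equivalence via $\bar R\leq F/[F,F]$ and the verification that condition iii) of the cover definition holds automatically from the generalized Hopf formula.
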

\begin{proof}\label{Satz:covers prop}
We consider the filtration $[R,F]\leq[F,F]\cap R\leq R\leq F$ and use the Noether isomorphism $R/[F,F]\cap R\simeq [F,F]R/[F,F]$. So
\[|F:[F,F]|=|F:[F,F]R|\cdot|[F,F]R:[F,F]|=|G:[G,G]|\cdot|R:[F,F]\cap R|\ .\]
The condition $R\leq[F,F]$ is equivalent to $[F,F]\cap R=R$, whence the above filtration reduces to $[R,F]\leq R\leq F$ and $E$ is a Schur cover.
If $G$ is a $p$-group it is known that the multiplier is also a $p$-group, so that $|[G,G]|\cdot|H_2 G|$ is a $p$-power.
On the other hand $|F/[F,F]|=\prod_im_i$, and the result follows.
\end{proof}
Dealing with finite groups allows representation theory.
Moreover, in the category of finite groups with a fixed set of generators, the new formula carries a natural notion of growth.
Among the periodic presentations some deserve a major attention.
\begin{definition}
 A periodic presentation is \emph{hyperiodic} if the group homomorphism which maps $F$ onto $G$ preserves the order of the generators.
 In this case, we say that $E=F/[R,F]$ is a \emph{hyperiodic cover}.
\end{definition}
 We shall see that $\GU G$ is hyperiodic, and that all the hyperiodic covers have minimal exponent. Later we will show that they corresponds precisely to the having a local-homeomorphism between the relevant topological spaces.
 To begin with we establish the existence of a Hopf formula for the unitary cover.
 To this aim we consider the periodic presentation
\[1\to R_u\to F_u\to G\to 1\ \ ,\ \ F_u=\Ast_{g\in G}\Z_{o(g)}\ ,\]
where the periods are precisely the orders of the group elements and for each free factor $\Z_{o(g)}$ the cyclic generators $f_g$ is mapped to $g$.
In turn, the unitary cover $\GU G$ is naturally isomorphic with the hyperiodic cover $E_u=F_u/[R_u,F_u]$.
\begin{theorem}\label{Satz:the unitary cover is universal}
 The covers $\GU G$ and $E_u$ are naturally isomorphic, by pairing the generators $(g,1_u)$ and $R_uf_g$ for all $g$ in $G$. 
\end{theorem}
\begin{proof}
We prove that both $E_u$ and $\GU G$ share the following universal property.
We consider pairs $(\Gamma,\sigma)$ consisting of a finite central extension which admits an order preserving section $\sigma:\Gamma\ot G$, and given two such $(\Gamma,\sigma)$ and $(\Gamma',\sigma')$ we look at the group homomorphisms $\varphi:\Gamma\to \Gamma'$ which respects the sections, that is $\sigma'=\sigma\circ\varphi$.
A pair $(U,\nu)$ is \emph{universal} if it maps uniquely
$(U,\nu)\to(\Gamma,\sigma)$ over each pair. As usual, given two universal pairs $(U,\nu)$ and $(U',\nu')$, the unique homomorphisms $\varphi:U\to U'$ and $\varphi':U'\to U$ are inverse isomorphisms, so that the isomorphism class of the underling group of an universal pair is uniquely determined.
First, we consider the pair $(E_u,\epsilon)$, where $\epsilon:f_g\mapsfrom g$.
Given $(\Gamma,\sigma)$ as above, since $\sigma$ is order preserving, we have an isomorphism $\gen{f_g}\to \gen{\sigma(g)}$, $f_g\mapsto\sigma(g)$, for each $g$ in $G$.
By the universal property of free products, we obtain a unique homomorphism $\varphi:E_u\to\Gamma$ such that $\varphi(f_g)=\sigma(g)$ and, since $f_g=\epsilon(g)$, we have $\varphi\circ\epsilon=\sigma$.
This proves the universality of $(E_u,\epsilon)$.
Then, we consider the pair $(\GU G,\upsilon)$, where $\upsilon:(g,1_u)\mapsfrom g$.
 Given a pair $(\Gamma,\sigma)$ as above, there is no loss of generality by assuming that $\Gamma=\gen{\sigma(G)}$.
 We write $\pi:\Gamma\to G$ and $A=\ker\pi$, and we consider the map $\dot\eta:\check{A}\to Z^2G$ associated with $\sigma$, as in \eqref{eq:the standard map at the level of cocycles}.
 Since $\sigma$ is order preserving, it follows that $\dot\eta(\check{A})$ is contained in $\ZU G$.
 Applying Lemma \ref{Satz:Schur construction and generators}, respectively the statements $iii)$ and $ii)$, it follows that $\Gamma$ is isomorphic to $\dot\eta(A\check\ )\check\ \propto G$, and that the latter is a homomorphic image $\GU G$.
 By checking the definitions, it follows that the composite homomorphism $\varphi:\GU G\to \Gamma$ is such that $(g,1_u)\mapsto\sigma(g)$, in particular $\varphi$ is unique and $\sigma=\upsilon\circ\varphi$, therefore $\GU G$ is universal.
\end{proof}
 By combining the theorems \ref{Satz:the unitary cover has minimal exponent} and \ref{Satz:the unitary cover is universal} we see that $E_u$ has minimal exponent among the covers. 
 For a hyperiodic cover $E$ Some caution has to be paid since it is not granted the existence of an order preserving section. A basic example of this comes with the dihedral group $D_8$ which is a hyperiodic cover of $\Z_2\times\Z_2$, since it is generated by two involutions, but it does not admit an order preserving section, since the product of the two involutions has order 4.
 Nonetheless, the property of having minimal exponent  is shared by all the hyperiodic covers.
 \begin{theorem}
 \label{Satz:hyperiodic covers have minimal exponent}
 Hyperiodic covers have minimal exponent.
\end{theorem}
\begin{proof}
 Given a periodic presentation $1\to R\to F\to G\to 1$ together with a cyclic group $\gen{y}$, we write $F'=F\ast\gen{y}$, and we extend to $F'=F\ast\gen{y}$ the homomorphism $\pi:F\to G$ by assigning the image of $y$.
 Thus $\pi':F'\to G$ satisfy $\pi'\mid_F=\pi$, and let $\pi'(y)=g$.
 Since $\pi$ is surjective, $g=\pi(w)$ for some $w$ in $F$.
 Then $z=y\inv{w}$ lies in $R'=\ker\pi'$, so that $\bar z=[R',F']z$ is central in $E'=F'/[R',F']$.
 Moreover, by induction on $k$, we have that \[z^{k+1}=(y^{k}w^{-k})z\equiv y^{k}zw^{-k}=y^{k+1}w^{-(k+1)}\mod[R',F']\ .\]
 In particular, we have $\bar z^{o(g)}=\bar w^{-o(g)}$ and $o(\bar z)=o(\bar w)$ under the additional condition $o(y)=o(g)$.
 Since $E'=\gen{E,y}=\gen{E,z}$, it follows that the cover $E'$ is a central product of $E$ and $\gen{z}$, with amalgamation over $C=\gen{\bar z^{o(g)}}\simeq \gen{\bar w^{-o(g)}}$.
 Since the order of $\bar z$ coincides with the order of the element $\bar w$ of $E$, it follows that $\exp E'=\exp E$.
 Now, given a hyperiodic presentation, we observe that all the generators satisfy the condition $o(y)=o(g)$ necessary to use the above fact.
 Therefore, it is possible to reach
  $F'=F\ast F_u$ by adding cyclic free factors either from $F$ or from $F_u$, and this procedure does not increase the exponent. Thus $\exp E=\exp E'=\exp E_u$, which is minimal.
\end{proof}
Is a periodic cover of a given group a proper extension?
This natural question introduces a notion of growth moving the focus to profinite groups, thus, given a periodic presentation $1\to R\to F\to G\to 1$ of a finite group, we consider the inverse system of periodic covers
 \[\cdots\to F/[R,_{k+1}F]\to F/[R,_kF]\to \cdots\to F/[R,F]\to G\simeq F/R\]
 where, as usual, $[R,_1F]=[R,F]$ and $[R,_{k+1}F]=[[R,_k F],F]$ for $k\geq 1$.
 \begin{definition}
 For a periodic presentation $1\to R\to F\to G\to 1$ of a finite group $G$, we say that $E_\infty=\varprojlim F/[R,_kF]$
 is a \emph{profinite periodic cover} of $G$.
 \end{definition}
The above question is refined by asking: is $E_\infty$ an infinite group?
The answer to both questions comes with Iwasawa's theorem claiming that any free group is a residually finite $p$-group, for any prime $p$.
It follows that any free power $(\Z_p)^{\ast d}$ is residually nilpotent (in fact this can be read through the proof of the theorem, see \cite{Robinson}), and this allows us to show that finite groups with non cyclic abelianization have non trivial periodic covers.
\begin{theorem}\label{Satz:profinite covers}
 If a finite group has non cyclic abelianization, then all of its periodic covers are proper, and all of its profinite periodic covers are infinite groups.
\end{theorem}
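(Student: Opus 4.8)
The plan is to prove that the profinite cover $E_\infty$ is infinite, from which properness of every ordinary periodic cover follows for free: if $[R,F]=R$ then $[R,_kF]=R$ for all $k$, so $E_\infty=\varprojlim F/[R,_kF]=G$ would be finite. Hence it suffices to take an arbitrary periodic presentation $1\to R\to F\to G\to 1$ with $F=\Ast_{i=1}^d\Z_{m_i}=\Ast_{i=1}^d\gen{f_i}$ and exhibit finite quotients of $F/[R,_kF]$ of unbounded order.

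Since $\abel G$ is non cyclic, I would first fix a prime $p$ with $\dim_{\mathbb{F}_p}(\abel G\otimes\mathbb{F}_p)=r\geq 2$; such a $p$ exists because a finite abelian group is cyclic exactly when all of its $p$-ranks are at most one. Letting $s$ be the number of indices $i$ with $p\mid m_i$, the surjection $F\onto\abel G\otimes\mathbb{F}_p=(\Z_p)^r$ factors through $\abel F\otimes\mathbb{F}_p=(\Z_p)^s$, so $s\geq r\geq 2$. I then define $\psi\colon F\onto P:=(\Z_p)^{\ast s}$ by sending $f_i$ to a generator of the corresponding free factor when $p\mid m_i$ and to $1$ otherwise; this respects the orders of the generators, so $\psi$ is a well defined surjection.

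The crux is to compare the filtration $[R,_kF]$ with the lower central series $\gamma_k(P)$ of the free power. Because $R\leq F$ one has $[R,_kF]\subseteq\gamma_{k+1}(F)$, and surjectivity of $\psi$ gives $\psi(\gamma_{k+1}(F))=\gamma_{k+1}(P)$; hence the composite $F\xrightarrow{\psi}P\to P/\gamma_{k+1}(P)$ kills $[R,_kF]$ and descends to a surjection $F/[R,_kF]\onto P/\gamma_{k+1}(P)$. It therefore suffices to show that the nilpotent quotients $P/\gamma_{k+1}(P)$ grow without bound, and this is where $s\geq 2$ is decisive. The group $P=(\Z_p)^{\ast s}$ is infinite, since a product of generators of two distinct free factors has infinite order, yet it is generated by elements of finite order; were it nilpotent it would be a finitely generated nilpotent torsion group, hence finite, a contradiction. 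Thus $P$ is not nilpotent.

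Finally I would upgrade non-nilpotence to genuine growth using Iwasawa's theorem, which guarantees that $P$ is residually nilpotent, i.e. $\bigcap_k\gamma_k(P)=1$. Each $P/\gamma_k(P)$ is a finitely generated nilpotent group generated by $p$-elements, hence a finite $p$-group, and the bonding maps $P/\gamma_{k+1}(P)\onto P/\gamma_k(P)$ are surjective; if their orders were bounded the series would stabilise, forcing $\gamma_K(P)=\gamma_{K+1}(P)=\cdots\subseteq\bigcap_k\gamma_k(P)=1$ and contradicting non-nilpotence. Therefore $|P/\gamma_k(P)|\to\infty$, whence $|F/[R,_kF]|\to\infty$ and $E_\infty$ is infinite. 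I expect this last passage to be the main obstacle: residual nilpotence by itself does not force growth, as it holds vacuously for nilpotent groups, so the proof must genuinely combine the residual nilpotence furnished by Iwasawa's theorem with the non-nilpotence coming from $s\geq 2$, and it is the interplay of these two facts, rather than any single calculation, that carries the argument.
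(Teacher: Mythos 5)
Your proof is correct, and at its core it uses the same two facts as the paper: that the relevant quotient of $F$ is a free product $(\Z_p)^{\ast s}$ of at least two copies of $\Z_p$, which is residually nilpotent by Iwasawa's theorem, and that such a group cannot be nilpotent because it is an infinite, finitely generated, torsion-generated group. Where you differ is in the bookkeeping that links the filtration $[R,_kF]$ to the lower central series. The paper passes through the kernel $K$ of the composite $F\onto G\onto A$ ($A$ a non-cyclic elementary abelian quotient), proves the identity $[K,_kF]T=\gamma_{k+2}(F)T$ for a suitable $T$ with $F/T\simeq(\Z_p)^{\ast s}$, deduces that the chain $[K,_kF]$ descends strictly, and then locates $R$ at the maximal level $l$ with $R\leq[K,_lF]$ to conclude $[R,F]\leq[K,_{l+1}F]\nsupseteq R$; infinitude of the profinite cover follows by iterating properness. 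You instead bypass $K$ entirely, using the elementary containment $[R,_kF]\subseteq\gamma_{k+1}(F)$ to produce surjections $F/[R,_kF]\onto P/\gamma_{k+1}(P)$ and then showing the finite $p$-groups $P/\gamma_{k+1}(P)$ have unbounded order; properness of the first step is recovered a posteriori from infinitude of $E_\infty$. Your route is arguably cleaner in that it yields explicit lower bounds $|F/[R,_kF]|\geq|P/\gamma_{k+1}(P)|$ quantifying the growth at every stage, while the paper's argument gives finer information about where $R$ sits relative to the canonical filtration $[K,_lF]$, which is what makes the connection to Johnson's characterization transparent. One small point worth making explicit in your write-up: the surjectivity of $E_\infty$ onto each term $F/[R,_kF]$ (needed to conclude that $E_\infty$ is infinite from unboundedness of the quotients) holds because the inverse system is a countable tower of surjections of finite groups.
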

\begin{proof}
First we show that, given a free product of cyclic groups $F$ which maps onto a non cyclic elementary abelian $p$-group with kernel $K$, then $[K,_{k+1}F]$ is properly contained in $[K,_kF]$ for any $k\geq 0$.
To this aim, denote $f_1,\ldots,f_d$ the generators of the free factors of $F$, and let $T=\gen{\{f_i\ |\ f_i\in K\},\{f_j^p\ |\ f_j\notin K\}}^F$.
 We have $K=\gamma_2(F)T$, and more generally $[K,_kF]T=\gamma_{k+2}(F)T$ for every $k\geq 0$.
 This proves the claim as $F/T$ is isomorphic with a free product of cyclic groups of order $p$, as such it is residually nilpotent by Iwasawa's theorem, and it is an infinite group since its abelianization is non cyclic.
 Now we are ready to prove the theorem.
 By hypothesis $G$ has non cyclic abelianization, so it maps onto some non cyclic elementary abelian $p$-group $A$.
 Let $1\to R\to F\to G\to 1$ be a periodic presentation.
 By composition we get a homomorphism from $F$ onto $A$, and we denote by $K$ its kernel.
 Clearly $R$ is contained in $K$, and as we first proved there exists $l\geq 0$ such that $R\leq [K,_l F]$ and $R\nleq[K,_{l+1}F]$.
 This shows that $[R,F]$ is properly contained in $R$.
\end{proof}
 In particular, any periodic cover of a non cyclic $p$-group $G$ is proper: this statement is equivalent to Johnson's characterization of the non cyclic $p$-groups with non trivial multiplier \cite{Johnson}.
 Moreover, it follows by Theorem \ref{Satz:Hopf properties of the covers} that, for any $p$-group $G$, a $p$-periodic presentation (i.e.~such that the periods are $p$-powers) affords an infinite pro-$p$ group.
 Moreover, under the order preserving condition, a hyperiodic presentation yields an inverse system of hyperiodic covers.
 Another remarkable fact, which also follows by Theorem \ref{Satz:Hopf properties of the covers}, is that after the first step any inverse system continues with hyperiodic Schur covers.
\begin{corollary}
In any profinite periodic cover of a finite group, the group $F/[R,_{k+1}F]$ is a hyperiodic Schur cover of $F/[R,_kF]$ for any $k\geq 1$.
%  So $H_2(F/[R,_kF])\simeq[R,_kF]/[R,_{k+1}F]$.
\end{corollary}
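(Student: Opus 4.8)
The plan is to identify, for each $k\geq 1$, the passage from $F/[R,_k F]$ to $F/[R,_{k+1}F]$ as the Hopf cover attached to a new periodic presentation, and then to read off both the locally unitary and the Schur properties directly from Theorem \ref{Satz:Hopf properties of the covers}. Writing $G_k=F/[R,_k F]$ and fixing $k\geq 1$, I would first observe that $1\to [R,_k F]\to F\to G_k\to 1$ is again a presentation of a finite group by the \emph{same} free product of finite cyclic groups $F$. Finiteness of $G_k$ follows by induction: $G_1=F/[R,F]$ is a finite cover of $G$ by Theorem \ref{Satz:Hopf properties of the covers}, and the same theorem, applied to the periodic presentation $1\to[R,_k F]\to F\to G_k\to 1$, shows $G_{k+1}=F/[[R,_k F],F]$ is a finite cover of $G_k$. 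Since the associated Hopf cover of this presentation is exactly $F/[[R,_k F],F]=F/[R,_{k+1}F]=G_{k+1}$, the claim reduces to verifying two conditions on the relation subgroup $[R,_k F]$.

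For the Schur property, Theorem \ref{Satz:Hopf properties of the covers} asserts that $G_{k+1}$ is a Schur cover of $G_k$ precisely when the relation subgroup is contained in $[F,F]$. This is immediate for $k\geq 1$, since the iterated commutator $[R,_k F]$ is generated by commutators of elements of $F$, so $[R,_k F]\leq [R,F]\leq [F,F]$. This is also exactly why the statement must begin at $k\geq 1$: the case $k=0$ would require $R\leq [F,F]$, which need not hold for an arbitrary periodic presentation.

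The more delicate point, which I expect to be the crux, is local unitarity, namely that the projection $F\to G_k$ preserves the order of each generator $f_i$. I would argue in two moves. First, the Hopf cover $F/[R,F]$ is always locally unitary over $F$: the retraction $F\to \gen{f_i}\simeq\Z_{m_i}$ that kills the remaining free factors annihilates $[F,F]\supseteq [R,F]$, so no nontrivial power $f_i^{\,n}$ with $0<n<m_i$ can lie in $[R,F]$, whence the image of $f_i$ in $G_1$ has order exactly $m_i$ (this is the fact recorded in Section \ref{section:topology}). Second, because $[R,_k F]\leq [R,F]$ for every $k\geq 1$, there is a surjection $G_k\to G_1$; consequently the order of the image of $f_i$ in $G_k$ is a multiple of $m_i$, while it also divides $m_i$ as $f_i^{\,m_i}=1$ in $F$. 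Hence the order is exactly $m_i$, and the presentation $1\to [R,_k F]\to F\to G_k\to 1$ is locally unitary.

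Combining the two conditions through Theorem \ref{Satz:Hopf properties of the covers} then gives that $G_{k+1}$ is a locally unitary Schur cover of $G_k$ for every $k\geq 1$, which is the assertion. The only step beyond bookkeeping is the order-preservation argument, whose essential content is that the very first step of the tower already restores the full periods $m_i$, after which, by factoring through $G_1$, they can no longer drop.
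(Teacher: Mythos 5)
Your proposal is correct and follows essentially the route the paper intends: the paper derives the corollary by applying Theorem \ref{Satz:Hopf properties of the covers} to the periodic presentation $1\to[R,_kF]\to F\to F/[R,_kF]\to 1$, with the Schur property coming from $[R,_kF]\leq[F,F]$ for $k\geq 1$ and local unitarity from the fact (noted in Section \ref{section:topology}) that the periods of $F$ survive in $F/[R,F]$ and hence in every further quotient of the tower. Your write-up merely makes explicit the retraction and induction steps that the paper leaves implicit.
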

It is worthy to mention a result of N.~Iwahori and H.~Matsumoto, stating that for any Schur cover $Y$ of a group $X$, then $H_2Y$ embeds into $X_{ab}\otimes H_2G$ \cite{IwahoriMatsumoto}.
We see that the growth rate of a profinite periodic cover is somehow controlled by the first two terms, the group $G$ and its periodic cover $E=F/[R,F]$.

It is interesting to compare this result with the affirmative solution of the restricted Burnside problem:
\emph{there are only a finite number of finite groups with $d$ generators and exponent $e$} \cite{VaughanLee}.
The two key ingredients in the proof are the Hall--Higman reduction of the problem to the case of $p$-groups,
and the Lie algebras technique of A.~I.~Konstrikin and E.~Zelmanov.
Now, given a periodic presentation $1\to R\to F\to G\to 1$, for any multiple $e$ of the exponent of $G$, one has an extension $F/[R,F]F^e$, and thus an inverse system
\[\cdots\to F/[R,_{k+1}F]F^e\to F/[R,_kF]F^e\to \cdots\to F/[R,F]F^e\to G\simeq F/R\ .\]
Assuming the Hall--Higman reduction, since $p$-groups are nilpotent, the solution of the restricted Burnside problem can be rephrased as follows: \emph{the above inverse system is stationary for any periodic presentation of a finite group.}

\section{Elliptic covers and topology}\label{section:topology}
In view of \eqref{eq:unitary cover and two generated subgroups}, to determine the exponent of the unitary cover $\GU G$, we can look at each two-generated subgroup separately, and calculate the order of the product $\upsilon(g_1)\upsilon(g_2)$ in $\GU(\gen{g_1,g_2})$, where $\upsilon$ denotes the canonical section.
This suggests to look at the hyperiodic presentations of $\gen{g_1,g_2}$ by $F=\gen{f_1}\ast\gen{f_2}$, where $f_i\mapsto g_i$, and we let $f_3=f_1f_2\mapsto g_3=g_1g_2$, the order of $[R,F]f_3$ carries some information about the exponent of $\GU G$.
Therefore, we may consider the presentation by the triangle group $\Delta=\gen{\ y_1,y_2,y_3\ |\ y_i^{m_i}\ ,\ y_1y_2y_3\ }$, where $y_i\mapsto g_i$
and $m_i=o(g_i)$, with kernel $S$.
The above information is encoded in the kernel $Z$ of the cyclic extension $1\to Z\to E\to D\to 1$, where $E=F/[R,F]$ and $D=\Delta/[S,\Delta]$,
This provides the algebraic motivation to consider the following presentations, where we allow any number of generators.

We consider the finitely presented group
\[\Delta=\gen{\ y_1,\ldots,y_{d+1}\ |\ y_1^{m_1}\ ,\ \ldots\ ,\  y_{d+1}^{m_{d+1}}\ ,\ y_1y_2\cdots y_{d+1}\ }\]
where $m_1,\ldots,m_d$ are integers greater than 1.
The group $\Delta$ is a proper Fuchsian group generated by elliptic elements \cite{Liebeck}.
We call $m_\ast=(m_1,\ldots,m_{d+1})$ the \emph{signature} of $\Delta$
and write $\Delta=\Delta(m_\ast)$.
% By taking the uncommon direction from algebra to geometry, we slightly divert from the classical theory and the ordering of the signature will be taken into account,
% cf. \cite{Breuer}.}
% We do not impose that $m_{d+1}=1$, although we clearly have an isomorphism
% \begin{equation}\label{eq:promoting generators}
%  \Delta(m_1,\ldots,m_{d+1})\simeq\Delta(m_1,\ldots,m_{d+1},1)
% \end{equation}
% given by promotion of $y_{d+1}$ to the role of generator.

In general, for a finitely generated group $G$, we call a \emph{generating system} any ordered sequence $g_\ast=(g_1,\ldots,g_d)$ such that $G=\gen{g_1,\ldots,g_d}$, and we say that $g_\ast$ is \emph{periodic} if each period $m_i=o(g_i)$ is an integer greater than 1.
Thus, an \emph{elliptic generating system}  is a periodic generating system $g_\ast=(g_1,\ldots,g_{d+1})$ such that $g_1g_2\cdots g_{d+1}=1$, and its \emph{signature} is the sequence $m_\ast=(m_1,\ldots,m_{d+1})$.
Our interest is set on the following definition.
\begin{definition}
 Let $G$ be a group admitting an elliptic generating system $g_\ast$ of signature $m_\ast$, and let $\Delta=\Delta(m_\ast)$.
 Then the group extension
\[1\to S\to\Delta\to G\to 1\ \ \ ,\ \ \  y_i\mapsto g_i\]
is an \emph{elliptic presentation} of $G$, and
\[1\to S/[S,\Delta]\to\Delta/[S,\Delta]\to G\to 1\ \ ,\ \ D=\Delta/[S,\Delta]\]
is an \emph{elliptic central extension} of $G$.
\end{definition}
Any hyperiodic presentation of a finite group $G$ affords in a natural way an elliptic presentation.
First, if the product $f_1\cdots f_d$ does not belong to the relators $R$, we add a cyclic free factor $\gen{f_{d+1}}$ of order $m_{d+1}$ to the group $F$, and extend the homomorphism $\pi:F\to G$ by setting $\pi(f_{d+1})=g_{d+1}$, otherwise we just write $d+1$ for $d$.
Therefore, we may assume that
\begin{equation}\label{eq:hyperiodic presentation}
 1\to R\to F\to G\to 1\ \ ,\ \ F=\Ast_{i=1}^{d+1}\gen{f_i}\ \ ,\ \ f_i\mapsto g_i\ ,\ 
\end{equation}
where $g_1\cdots g_{d+1}=1$, and $m_i=o(g_i)=o(f_i)$ is an integer greater than 1 for all $i=1,\ldots,d+1$. Thus $\Delta=\Delta(m_\ast)$ is hyperiodically presented by $F$ as
\begin{equation}\label{eq:presenting Delta}
1\to T\to F\to \Delta\to 1\ \ \ ,\ \ \  T=\gen{f_{1}\cdots f_{d+1}}^F
\end{equation}
On the other hand, since $f_1\cdots f_{d+1}\in R$, then $\pi:F\to G$ factors through $\pi':F\to \Delta$, providing an elliptic presentation of $G$.
Moreover, if $E=F/[R,F]$ and $D=\Delta/[S,\Delta]$, by identifying $D$ with $F/[R,F]T$, we obtain an extension $1\to Z\to E\to D\to 1$ with cyclic kernel $Z=\gen{[R,F]f_1\cdots f_{d+1}}$.
On the other hand, every elliptic presentation can be obtained in this way, by Theorem \ref{Satz:Hopf properties of the covers} we have the following result.
\begin{corollary}
Every elliptic central extension of a finite group is finite, and every elliptic central extension of a $p$-group is a $p$-group.
\end{corollary}

It is natural to ask whether an elliptic central extension is a cover. In general the answer is negative, as we mentioned in the introduction, since $H_2\Delta$ is not necessarily trivial (to see that it is cyclic, follows immediately by the Hopf formula for  \eqref{eq:presenting Delta}).
Still, an elliptic presentation affording a cover always exists.
\begin{theorem}\label{Satz:existence of elliptic covers}
 Every non--trivial finite group admits an elliptic cover.
\end{theorem}
\begin{proof}
 Choose a hyperiodic presentation $1\to R\to F\to G\to 1$ by $F=\ast_{i=1}^d\gen{f_i}$, where $\pi(f_i)=g_i\neq 1$ for all $i$.
 We consider the presentation, which is also hyperiodic, obtained by cloning the generators
 \[1\to \tilde R\to\tilde F\to G\to 1\ \ ,\ \ \tilde F=\Ast_{i=1}^d\Ast_{j=1}^{m_i}\gen{f_{i,j}}\ \ ,\ \ \gen{f_{i,j}}\simeq\Z_{m_i}\ \ ,\ \ f_{i,j}\mapsto g_i\ .\]
 Define $f_\ast=f_{1,1}\cdots f_{1,m_1}\cdots f_{d,1}\cdots f_{d,m_d}$, since $\pi(f_\ast)=g_1^{m_1}\cdots g_d^{m_d}=1$ we otain, as in \eqref{eq:hyperiodic presentation}, an elliptic presentation
\[1\to\tilde S\to\tilde \Delta\to G\to 1\ \ ,\ \ \tilde\Delta=\tilde F/\gen{f_\ast}^{\tilde F}\ .\]
On the other hand, the element $f_\ast$ lies in the kernel of the surjective homomorphism from $\tilde F$ to $F$ mapping $f_{i,j}$ to $f_i$ for all $i,j$,  which in turn factorizes through $\tilde\Delta$.
It follows that the central extension $\tilde D=\tilde\Delta/[\tilde\Delta,\tilde S]$ maps surjectively onto $E=F/[R,F]$, and therefore it is a cover.
\end{proof}
At this point we describe the underlying topology in analogy with Hopf's proof of the formula, referring to \cite{Brown,Spanier}.
A \emph{covering projection} between topological spaces $W$ and $Y$ is a continuos map $p:W\to Y$ such that every point $y\in Y$ admits a neighborhood $U$ which is \emph{evenly covered} by $p$, that is to say, $\inv{p}(U)$ is the disjoint union of open subsets of $W$ each one mapped homeomorphically onto $U$ by $p$.
In this situation, $W$ is a \emph{covering space} of $Y$, and we have the \emph{group of deck transformations}
\[\Aut(W/Y)=\{\ \varphi\in\Aut(W)\ |\ p\circ\varphi=p\ \}\ .\]
% \[1\to\pi_1W\to\pi_1 Y\to\Aut(W/Y)\to 1\ .\]
% We remark that for a generic topological space the automorphism group consists of all the homeomorphisms from the space onto itself, still, for cellular complexes we may consider only those automorphisms preserving the cellular structure.
Covering spaces satisfy the \emph{unique--lifting property}, in particular, every path $\gamma$ in $Y$ can be lifted to a path $\gamma'$ in $W$, and if $\gamma''$ is another lifting of $\gamma$ which agrees with $\gamma'$ on some point, then $\gamma''=\gamma'$.
If $Y$ is connected, locally simply--connected, and semilocally path--connected, then there exists the \emph{universal cover} $\tilde Y$, which is a simply connected covering space, and in this case the fundamental group of $Y$ can be obtained as
\[\pi_1Y=\Aut(\tilde Y/Y)\ .\]
This is the case of connected locally--finite cellular  and simplicial complexes, in this case the universal cover $\tilde Y$ has a natural cellular structure, where a cell $\hat\sigma$ in $\tilde Y$ is a connected components of $\inv{p}(\sigma)$ for some cell $\sigma$ of $Y$, and this structure is preserved by $\Aut(\tilde Y/Y)$.
A cellular complex $Y$ affords the chain complex
\[\ZY_\ast:\ \cdots\to\ZY_2\to\ZY_1\to\ZY_0\to 0\] where $\ZY_k$ is the free $\Z$-module over $Y_k$, the homomorphisms $\partial:\ZY_q\to\ZY_{q-1}$ are determined by the topological boundary map $\partial:Y_q\to(Y_{q-1})^{q}$, and they satisfy $\partial^2=0$.
The \emph{homology} $H_\ast Y$ is the collection of abelian groups \[H_qY=\ker\partial_q/\im\partial_{q+1}\ .\]
The map $\partial$ is particularly easy to describe for a simplicial complex $Y$: for an oriented $q$-simplex $\sigma=\gen{s_0,\ldots,s_q}$, the $i$-th face is $\sigma_i=\gen{s_0,\ldots,\hat s_i,\ldots,s_{q}}$, and the topological boundary $\partial\sigma=(\sigma_0,\ldots,\sigma_{q})$ corresponds to the element $\partial\sigma=\sum_{i=1}^q(-1)^i\sigma_i$ in the chain module $\ZY_{q-1}$.
Since the $q$-simplices are free generators of $\ZY_q$, the map $\partial$ extends to a homomorphism by linearity.
% The \emph{augmented complex} is obtained by completing $\ZY_\ast$ with the homomorphism $\varepsilon:\ZY_0\to\Z$ mapping all $Y_0$ to 1, thus $\ZY_{-1}=\Z$ and $\partial_{-1}=\varepsilon$.
The singular homology addresses the case of generic topological spaces, still giving the same result for cellular and simplicial complexes.
In general the homology is a homotopical invariant, thus a contractible space $Y$ has the same homology of a point, that is $H_0Y=\Z$ and $H_qY=0$ for $q\geq 1$.
Moreover, if $Y$ is connected then $H_0Y=\Z$, and if it is path--connected then
\[H_1Y\simeq\pi_1Y/[\pi_1Y,\pi_1Y]\ .\]

A \emph{$G$-complex} is a cellular or simplicial complex $Y$ endowed with a topological action $G\to\Aut(Y)$, in such a way that $G$ permutes the cells.
In this situation $\ZY_\ast$ is a chain complex of permutation $\ZG$-modules, and every homology group $H_qY$ is a $\ZG$-module.
% When $G$ is finite, it is possible to compute the character afforded by $H_qY$ as follows.
% By tensoring over $\C$, we obtain the complex $\CY_\ast$ of $\CG$-modules, and every $\CG$-module is completely reducible.
% Therefore, $\ker\partial_q$ is complemented in $\CY_q$, and $\im\partial_{q+1}$ is complemented in $\ker\partial_q$.
% Since $\CY_q/\ker\partial_q\simeq\im\partial_q$ and $\ker\partial_q/\im\partial_{q+1}\simeq\C\otimes H_qY$, we see that $\CY_q\simeq\im\partial_{q+1}\oplus(\C\otimes H_qY)\oplus\im\partial_{q}$.
% In particular, we consider a continuous map $p:W\to Y$ between connected locally-finite cellular complexes, with the property that $p(\sigma)$ is a cell of $Y$ for any cell $\sigma$ of $W$.
% We have the following necessary conditions for $p$ to be a covering projection.
% For all $\sigma\in W_k$ we have $p(\sigma)\in Y_k$, and $p$ induces bijective maps between the sets $\partial\sigma\to \partial p(\sigma)$ and $\{\ \tau\in W_{k+1}\ |\ \sigma\in\partial\tau\ \}\to\{\ \rho\in Y_{k+1}\ |\ p\sigma\in\partial\rho\ \}$.
% These are also sufficient conditions, provided that the restriciton $p:\sigma^\circ\to p(\sigma)^\circ$ is a homeomorphism.
To describe a $G$-complex we list in every dimension $Y_q$ the representatives for the action together with their stabilizer in $G$ and their boundary in $Y_{q-1}$.
It is worth to illustrate some elementary examples.

\emph{i) The action
of $\Z$ by translation on the real line $\R$.}
%  \[\Z\to\Aut(\R)\ ,\ n:\alpha\mapsto n+\alpha\ .\]
We decompose the real line in a one--dimensional complex whose vertices are the points $k\in\Z\subseteq\R$ and edges are the intervals $[k-1,k]\subseteq\R$.
By identifying $\Z$ with an infinite cyclic group $G=\gen{g}\simeq C_\infty$, the complex $Y$ is given by
\[Y_0=G\v\ \ ,\ \ Y_1=G\e\ \ ;\ \ \partial\e=(g\v,\v)\]
so that $g^k\v$ and $g^k\e$ correspond in the real line $\R$ to $k$  and $[k-1,k]$ respectively, in this case the stabilizers are trivial (the action is free).
The space $Y$ is a simplicial complex, and it is generated by the simplices $\gen{\v}$ and $\e=\gen{\v,g\v}$ under the action of $G$.
Therefore, $Y$ affords the chain complex
\[0\to\Z C_\infty\to\Z C_\infty\to 0\ \ ;\ \ 0\to\ZG\e\to\ZG\v\to 0\ \ ;\ \ \partial\e=g\v-\v\]
and the homology is $H_0Y=\Z$ and $H_1Y=0$, in fact $Y$ is contractible.

\emph{ii) The cyclic group $C_m=\gen{g}$ acting on the unit circle $\Sph^1\subseteq\C$ by rotations.}
% \[C_m\to\Aut(\Sph^1)\ ,\ g^k:\zeta\to e^{2\pi ik/m}\zeta\]
We consider $Y$ to be an $m$-\emph{cycle}, the boundary of a polygon with $m$ edges, corresponding to the $m$-th cyclotomic subdivision of $\Sph^1$.
The definition of the complex $Y$ is similar to $i)$  but for a different group. The chain complex is
\[0\to\Z C_n\to\Z C_n\to 0\ \ ;\ \ 0\to\ZG\e\to\ZG\v\to 0\ \ ;\ \ \partial\e=g\v-\v\]
and we have $H_0Y=\Z$ and $H_1 Y=\gen{\nu\e}\simeq\Z$, where $\nu=1+g+\cdots+g^{m-1}$ is the norm element of $\ZG$.
In fact, $\Sph^1$ is connected and $\pi_1\Sph^1=\Z$.

\emph{iii) The Cayley graph of a group.}
For any group $G$ which is finitely generated by $g_\ast=(g_1,\ldots,g_d)$, the \emph{Cayley graph} $Y$ associated to $g_\ast$ is the one-dimensional complex defined by
\[Y_0=G\v\ \ ,\ \ Y_1=G[\e_1,\ldots,\e_d]\ \ ;\ \ \partial\e_i=(g_i\v,\v)\]
again the stabilizers are all trivial.
Therefore, $Y$ affords the chain complex
\[0\to(\ZG)^d\to\ZG\to 0\ \ ;\ \ 0\to\ZG[\e_1,\ldots,\e_d]\to\ZG\v\to 0\ \ ;\ \ \partial\e_i=g_i\v-\v\]
where $H_0Y=\Z$, and in general $H_1Y$ does not vanishes.

We remark that, for any surjective homomorphism $\pi:H\to G$ of groups, if $H$ is finitely generated by $h_\ast$ then $G$ is generated by $g_\ast=\pi h_\ast$.
Therefore, we have a natural map $p:W\to Y$, which is a covering projection between the Cayley graphs $W$ and $Y$ associated to $h_\ast$ and $g_\ast$ respectively.
The map $p$ is defined in the obvious way by setting $p(h\v)=(\pi h)\v$ and $p(h\e_i)=\pi(h)\e_i$.
Denoting by $A=\ker\pi$, this gives a homeomorphism $Y\simeq W/A$ which for the chain modules amounts to the isomorphism $\ZY_\ast\simeq\Z W_\ast\otimes_A\Z$ induced by $\pi$.

This is the starting point in the proof of the Hopf formula.
For a free presentation $1\to R\to F\to G\to 1$, the Cayley graph of $F$ is a tree $\tilde X$, that of $G$ is $Y=\tilde X/R$, and $X=\tilde X/F$ is a bouquet of circumferences.
Since $\tilde X$ is contractible, it is the universal cover of $X$ and $Y$, therefore
\[R\simeq\pi_1Y\ \ ,\ \ F\simeq\pi_1X\ \ ,\ \ G\simeq\Aut(Y/X)\ .\]
In particular, $H_1X\simeq F/[F,F]$, and $H_1Y\simeq R/[R,R]$ is the relations module.
It can be shown that $H_2G\simeq\ker\{(H_1Y)_G\to H_1X\}$, where $(H_1Y)_G=H_1Y\otimes_{\ZG}\Z$ are the coinvariants of $H_1Y$.
Since the action of $G$ is induced by conjugation of $F$ over $R$, we have $(H_1Y)_G\simeq R/[R,F]$, so that $H_2G\simeq R\cap[F,F]/[R,F]$.

For our purpose, first of all we shall find the analogue of a tree for a free product $F=\Ast_{i=1}^d\gen{f_i}$ of finite cyclic groups. 
The Cayley graph of $F$ is not contractible, we have seen this phenomena already for the case $d=1$ in the example $ii)$, it resembles the above tree $\tilde X$ in sense that it is the join of many cycles just as $\tilde X$ is the join of many lines.
% Each vertex $w\v$ corresponds to a group element $w\in F$, and this is the common vertex of the cycles $g\gamma_1,\ldots,g\gamma_d$, where $\gamma_i=(\e_i,g_i\e_i,\ldots,g_i^{m_i-1}\e_i)$.
We shall fill all these cycles to obtain a contractible space, first we describe the familiar case of a single cyclic group. 

\emph{iv) The cyclic group $C_m=\gen{g}$ acting on the unit disk $\Disk^2\subseteq\C$ by rotations}
\[C_n\to\Aut(\Disk^2)\ ,\ g^k:\zeta\to e^{2\pi ik/m}\zeta\]
As anticipated, we start from the Cayley graph $G[\v,\e]$ of $C_m$, and fill the cycle $\gamma=(\e,g\e,\ldots,g\e^{m-1})$ with a two--cell $\p$, a polygon with $m$-sides.
Therefore, in dimension two we have the only $\p$ whose stabilizer is the whole group, and $Y$ is  defined by
\[Y_0=G\v\ ,\ Y_1=G\e\ ,\ Y_2=\{\p\}\]
\[G_\p=G\ ;\ \partial\e=(g\v,\v)\ ,\ \partial\p=(\e,g\e,\ldots,g^{n-1}\e)\]
the boundary $\partial\p$ is attached to $Y_1$, by starting at $\v$ and following the orientation of the cycle.
The chain module $\ZY_2=\Z\p$ is the trivial module $\Z$, so that we obtain the chain complex
\[0\to\Z\to\Z C_n\to\Z C_n\to 0\ \ ;\ \ 0\to\Z\p\to\ZG\e\to\ZG\v\to 0\]
\[\partial\p=\nu\e\ \ ,\ \ \partial\e=g\v-\v\ \ ;\ \ \nu=1+g+\ldots+g^{n-1}\ .\]
Nonetheless, since the topological action on $\p$ is non-trivial we should describe this action in a second way, this time by associating a \emph{simplicial} complex $Y$ endowed with a simplicial action.
The new complex is obtained by barycentric subdivision of the polygon $\p$, therefore, we add a new vertex $\c$ which is the center of $\p$, the radii $g^k\r$ connecting $g^k\v$ to $\c$, and triangles $g^k\f$ subdividing $\p$, thus $Y$ is a \emph{wheel} filled by triangles.
Precisely,
\[Y_0=G[\v,\c]\ \ ,\ \ Y_1=G[\e,\r]\ \ ,\ \ Y_2=G[\f]\ \ ;\ \ G_\c=G\]
\[\partial\e=(g\v,\v)\ \ ,\ \ \partial\r=(\c,\v)\ \ ,\ \ \partial\f=(g\r,\r,\e)\]
and the generating simplices are
\[\gen{\v}\ \ ,\ \ \gen{\c}\ \ ,\ \ \e=\gen{\v,g\v}\ \ ,\ \ \r=\gen{\v,\c}\ \ ,\ \ \f=\gen{\v,g\v,\c}\]
Therefore, we obtain the chain modules and homomorphisms
\[0\to\Z C_n\to(\Z C_n)^2\to\Z C_n\oplus\Z\to 0\ \ ;\ \ 0\to\ZG\f\to\ZG[\e,\r]\to\ZG[\v,\c]\to 0\]
\[\partial\f=g\r-\r+\e\ \ ,\ \ \partial\e=g\v-\v\ \ ,\ \ \partial\r=\c-1\ .\]
The homology of these two complexes is $H_0Y=\Z$, and $H_1Y=H_2Y=0$, in fact, the underlying space is $\Disk^2$ which is contractible.
We remark, in addition, that the simplicial complex $Y$ retracts over the graph $\ZG[\v,\c,\r]$, which is a \emph{star} of center $\c$ with radii $\r,g\r,\ldots,g^{m-1}\r$ ending at $\v,g\v,\ldots,g^{m-1}\v$.

Readily, we define the complexes associated to a periodic generating system of a finitely generated group $G$, by giving the representatives, the non--trivial stabilizers, and the boundaries maps.
\begin{definition}\label{dfn:periodic complexes}
Let $G$ be a group which admits a periodic generating system
\[g_\ast=(g_1,\ldots,g_d)\ \ ,\ m_i=o(g_i)\ ,\ 1<m_i<\infty\ .\]
The \emph{periodic complex} is the simplicial complex defined by
\[Y_0=G[\v,\c_1,\ldots,\c_d]\ ,\ Y_1=G[\e_1,\ldots,\e_d,\r_1,\ldots\r_d]\ ,\ Y_2=G[\f_1,\ldots,\f_d]\]
\[G_{\c_i}=\gen{g_i}\ \ ;\ \ \partial\e_i=(g_i\v,\v)\ \ ,\ \ \partial\r_i=(\c_i,\v)\ \ ,\ \ \partial\f_i=(g_i\r_i,\r_i,\e_i)\]
where the generating simplices are
\[\gen{\v}\ ,\ \gen{\c_i}\ ,\ \e_i=\gen{\v,g_i\v}\ ,\ \r_i=\gen{\v,\c_i}\ ,\ \f_i=\gen{\v,g_i\v,\c_i}\ .\]
The \emph{periodic graph} is $G[\v,\c_1,\ldots,\c_d,\r_1,\ldots,\r_d]\subset Y$.
\end{definition}
% The \emph{periodic cellular complex} of $G$ associated to $g_\ast$ is defined by
% \[Y_0=G\v\ ,\ Y_1=G[\e_1,\ldots,\e_d]\ ,\ Y_2=G[\p_1,\ldots,\p_d]\]
% \[G_{\p_i}=\gen{g_i}\ \ ;\ \ \partial\e_i=(g_i\v,\v)\ \ ,\ \ \partial\p_i=(\e_i,g_i\e_i,\ldots,g_i^{m_i-1}\e_i)\ .\]
In order to visualize $Y$ it is convenient to introduce, as in the example $iv)$, also the cellular complex $G[\v,\e_1,\ldots,\e_d,\p_1,\ldots,\p_d]$ where
\[\p_i=\f_i\cup g_i\f_i\cup\ldots\cup g_i^{m_i-1}\f_i\ \ ,\ \ \partial\p_i=(\e_i,g_i\e_i,\ldots,g_i^{m_i-1}\e_i)\ .\]
Thus, each vertex $g\v$ is the common vertex of the polygons $g\p_1,\ldots,g\p_d$, where $g\p_i$ has $m_i$ sides corresponding to the left coset $g\gen{g_i}$.
Moreover, $Y$ retracts onto the periodic graph, since $\f_i$ is the unique 2-simplex containing $\e_i$ and so it retracts on $\r_i\cup g_i\r_i$.
In the periodic graph the vertex $g\c_i=g\gen{g_i}\c_i$ is the center of the star $g\alpha_i$, where $\alpha_i=\gen{g_i}\r_i$, and each vertex $g\v$ is a common vertex of the stars $g\alpha_1,\ldots,g\alpha_d$, and these are the only stars containing it.

We denote $G_i=G/\gen{g_i}$, the permutation module $\ZG_i$ is induced from the trivial $\Z\gen{g_i}$-module $\Z$, so that
\[\ZG_i=\Z\uparrow_{\gen{g_i}}^G=\ZG\otimes_{\Z\gen{g_i}}\Z\ .\] 
The periodic complex $Y$ affords the chain complex of modules
 \[0\to(\ZG)^d\to(\ZG)^{2d}\to\ZG\oplus\ZG_1\oplus\cdots\oplus\ZG_d\to 0\]
\[0\to\ZG[\f_i]_i\to\ZG[\e_i,\r_i]_i\to\ZG[\v,\c_i]_i\to 0\]
\[ \partial\f_i=g_i\r_i-\r_i+\e_i\ \ ,\ \ \partial\e_i=g_1\v-\v\ \ ,\ \ \partial\r_i=\c_i-\v\ .\]
Similarly, the cellular complex affords the chain complex 
 \[0\to\ZG_1\oplus\cdots\oplus\ZG_d\to(\ZG)^d\to\ZG\to 0\]
 where $\partial\p_i=\nu_i\e_i$ for the norm element $\nu_i=1+g_i+\ldots+g_i^{m_i-1}$ of $\Z\gen{g_i}$, and 
 the periodic graph affords the complex
 \[0\to(\ZG)^{d}\to\ZG\oplus\ZG_1\oplus\cdots\oplus\ZG_d\to 0\ .\]
%  \[0\to\ZG[\p_i]_i\to\ZG[\e_i]_i\to\ZG[\v]\to 0\]
% \[\partial\p_i=\nu_i\e_i\ \ ,\ \ \partial\e_i=g_1\v-\v\ \ ;\ \ \nu_i=1+g_i+\ldots+g_i^{m_i-1}\ ,\]
% the periodic graph affords the sequences of modules
%  \[0\to(\ZG)^{d}\to\ZG_1\oplus\cdots\oplus\ZG_d\to 0\]
% \[0\to\ZG[\r_1,\ldots,\r_d]\to\ZG[\v,\c_1,\ldots,\c_d]\to 0\ .\]
Since these spaces are homotopy equivalent, the chain complexes have the same homology groups, clearly $H_0Y=\Z$ since $Y$ is connected, and $H_2Y=0$ since $Y$ retracts on a graph.
In analogy with the classical case, $H_1Y\simeq R/[R,R]$, is the relation module for the hyperiodic presentation $1\to R\to F\to G\to 1$ by $F=\Ast_{i=1}^d\gen{f_i}$ with $\pi(f_\ast)=g_\ast$.
Here $R\simeq\pi_1Y$ is a the fundamental group of a graph and so it is free (alternatively this can be proved directly by using Kurosh Theorem, cf.~\cite{Robinson}).
In fact, we shall see that such a presentation induces a covering projection $\tilde Y\to Y$ where $\tilde Y$ is the periodic complex associated to $f_\ast$ (Theorem \ref{Satz:hyperiodic and local homeomorphism}), and the following result proves that $\tilde Y$ is the universal cover of $Y$ and in particular $R\simeq\pi_1Y$.
 \begin{theorem}\label{Satz:hyperiodic universal}
  Let $F=\Ast_{i=1}^d\gen{f_i}$ be a free product of finite cyclic groups.
  Then the periodic complex $\tilde Y$ associated to the generating system $f_\ast$ is contractible.
 \end{theorem}
 \begin{proof}
  Since $\tilde Y$ retracts on the periodic graph $\Gamma$ of $F$, it is enough to prove that $\Gamma$ is a tree.
  As a general property of free products, every element of $F$ can be written uniquely in reduced form as a product $w=u_1\cdots u_l$ where $1\neq u_i\in\gen{f_{j_i}}$ and $j_i\neq j_{i+1}$ for all $i=1,\ldots,l$, here the empty word $w=\varnothing$ corresponds to the identity element $1$ of $F$ \cite{Robinson}.
  The graph $\Gamma$ is bipartite in the set of vertices $F[\v]$ and $F[\c_1,\ldots,\c_d]$, therefore a non--trivial path $\gamma$ crosses at least one vertex $g\v$ in the component $F[\v]$.
  In particular, a non--trivial closed path $\gamma$ determines a sequence of vertices \[(g\v,g\c_{j_1},gu_1\v,gu_1\c_{j_2},gu_1u_2\v,\ldots,gu_1\cdots u_{l-1}\c_{j_l},gu_1\cdots u_l\v)\]
  where $u_i\in\gen{f_{j_i}}$ for all $i$, and $u_1\cdots u_l=1$, and when $\gamma$ is a cycle we have the additional condition that $u_i\neq 1$ and $j_i\neq j_{i+1}$, since $gu_1\cdots u_i\v\neq gu_1\cdots u_{i-1}\v$ and $gu_1\cdots u_i\c_{j_{i+1}}\neq gu_1\cdots u_i\c_{j_{i}}=gu_1\cdots u_{i-1}\c_{j_{i}}$.
  However, these are precisely the condition for the word $w=u_1\cdots u_l$ to be reduced, and since $u_1\cdots u_l=1$ in $F$ we obtain a contraddiction.
  It follows that $\Gamma$ does not contain any cycle, and thus it is a tree.
 \end{proof}
 We shall relate this construction to the periodic presentations and the associated finite covers, and it is not costly to work with more general extensions.
 Let $1\to N\to H\to G\to 1$ be any group extension, and denote $\pi:H\to G$.
 A generating system $h_\ast=(h_1,\ldots,h_d)$ of $H$ determines $g_\ast=\pi h_\ast$ of $G$, that is $g_\ast=(g_1,\ldots,g_d)$ where $g_i=\pi(h_i)$ for all $i$.
 Clearly, since $\pi$ is surjective and $o(g_i)\leq o(h_i)<\infty$ then $g_\ast$ is a periodic generating system.
 We say that the extension is \emph{periodic} in case $h_\ast$ is such and $o(g_i)>1$ for all $i$, that is, none of $h_1,\ldots,h_d$ is contained in $N$.
 Given a periodic extension $(H,G,h_\ast,g_\ast)$ we obtain by Definition \ref{dfn:periodic complexes} the simplicial complexes $W$ and $Y$, for $h_\ast$ and $g_\ast$ respectively, and there is a natural continuous simplicial map  $p:W\to Y$ associated to $\pi$.
 We define $p$ in the obvious way by setting
 \[p:W\to Y\ \ ,\ \ p(h\sigma)=\pi(h)\sigma\]
 where the symbol $\sigma\in\{\v,\c_i,\e_i,\r_i,\f_i\}$ denotes any generating simplex, respectively of $W$ and $Y$.
 Simply by restriction, we also have a map between the periodic graphs.
 Moreover, an analogue map can be defined for the cellular complexes. In this case, beside that $p(h\p_i)=g\p_i$, we must describe the topological map $p:h\p_i\to g\p_i$ by means of the barycentrical subdivision $\p_i=\f_i\cup g_i\f_i\cup\ldots g_i^{m_i-1}\f_i$ and the simplicial map.
 Alternatively, we can use the homeomorphism $\p_i\simeq\Disk^2$ and the power--map $\cdot~^{n_i}:\Disk^2\to\Disk^2$ for $n_i=o(h_i)/o(g_i)$.
 In case $n_i=1$ we have that $p:h\p_i\to g\p_i$ for $g=\pi(h)$ is a homeomorphism, otherwise $g\c_i$ is a \emph{branch point} and $p$ is \emph{ramified} at $g\c_i$.
 
 In all the three possibilities of simplicial complexes, cellular complexes, and graphs, for a \emph{hyperiodic extension}, that is when $o(h_i)=o(g_i)$ for all $i$, the map $p$ is a covering projection.
 On the other hand, if $o(h_i)>o(g_i)$ for some $i$,
 then the map $p$ is not a local homeomorphism, since the for any open neighborhood $U$ of $\c_i$ in $W$, the restriction of $p$ to $U$ is not injective.
 Therefore, we have obtained the following characterization for the hyperiodic extensions in terms of their topological action:
 \begin{theorem}\label{Satz:hyperiodic and local homeomorphism}
 A periodic extension $(H,G,h_\ast,g_\ast)$ induces a continous map $p:W\to Y$ between topological spaces.
 Moreover, $p$ is a covering projection if and only if the extension is hyperiodic.
 \end{theorem}
 We are interested in the above presentations related to the Hopf formula.
We let $1\to R\to F\to G\to 1$ be a periodic presentation of a finite group, $E=F/[R,F]$ be the associated periodic cover, and we suppose that the free generators $f_1,\ldots,f_d$ are not contained in $R$.
 By setting $h_i=[R,F]f_i$ and $g_i=Rf_i$, we obtain the complexes $\tilde W$, $W$, and $Y$ associated to the generating sets $f_\ast$, $h_\ast$ and $g_\ast$.
 We observe that, since $E/[E,E]\simeq F/[F,F]$, then the periods of $f_\ast$ and $h_\ast$ are the same, and so the map $\tilde W\to W$ is a covering projection.
 On the other hand, by the above theorem the map $W\to Y$ is a covering projection precisely when $E$ and $G$ have the same periods, that is when $E$ is hyperiodic.
 We have the following topological interpretation of the hyperiodic covers, where we see that being of minimal exponent is a necessary condition to afford a covering projection between cellular complexes.
 \begin{corollary}
 Any periodic cover $E$ of a finite group $G$ induces a continuous map $p:W\to Y$ between simplicial complexes, which is a covering projection if and only if the cover $E$ is hyperiodic.
 \end{corollary}
 Now we associate an elliptic generating system $g_\ast$ of a  group $G$ to an oriented simplicial surface $Y$.
 \begin{definition}
  Let $g$ be a group having an elliptic generating system
  \[g_\ast=(g_1,\ldots,g_{d})\ \ ,\ \ g_1\cdots g_{d+1}=1\ \ ,\ \ m_i=o(g_i)\ ,\ 1<m_i<\infty\ .\]
  The \emph{elliptic surface} of $G$ associated to $g_\ast$ is the simplicial complex
 defined by 
\[Y_0=G[\v,\c_i,\o]_i\ ,\ Y_1=G[\e_i,\r_i,\s_i]_i\ ,\ Y_2=G[\f_i,\t_i]_i\ \ ;\ \ i=1,\ldots,d+1\]
\[G_{\c_i}=\gen{g_i}\ \ ;\ \ \partial\e_i=(g_i\v,\v)\ \ .\ \ \partial\r_i=(\c_i,\v)\ \ ,\ \ \partial\s_i=(g_{i}\cdots g_{d+1}\o,\v)\]
\[\partial\f_i=(g_i\r_i,\r_i,\e_i)\ \ ,\ \ \partial\t_i=-(g_i\s_{i+1},\s_i,\e_i)\]
 the generating simplices are
 \[\gen{\v}\ ,\ \gen{\c_i}\ ,\ \gen{\o}\ ,\ \e_i=\gen{\v,g_i\v}\ ,\ \r_i=\gen{\v,\c_i}\ ,\ \s_i=\gen{\v,g_i\cdots g_{d+1}\o}\]
 \[\f_i=\gen{\v,g_i\v,\c_i}\ ,\ \t_i=\gen{\v,g_i\cdots g_{d+1}\o,g_i\v}\ .\]
 The reason for the negative sign of $\partial\t_i$ is to have a compatible orientation.
 \end{definition}
%  The orientation of the simplex $\f_i=\gen{\v,g_i\v,\c_i}$ gives
%  \begin{align*}
%   (\f_i)_0&=\gen{\hat\v,g_i\v,\c_i}=\gen{g_i\v,\c_i}=g_i\r_i\\
%   (\f_i)_1&=\gen{\v,\hat{g_i\v},\c_i}=\gen{\v,\c_i}=\r_i\\
%   (\f_i)_2&=\gen{\v,g_i\v,\hat\c_i}=\gen{\v,g_i\v}=\e_i\\
%  \end{align*}
%  then $\t_i=\gen{\v,g_i\cdots g_{d+1}\o,g_i\v}$
%   \begin{align*}
%   (\t_i)_0&=\gen{\hat\v,g_i\cdots g_{d+1}\o,g_i\v}=\gen{g_i\cdots g_{d+1}\o,g_i\v}=-g_i\s_{i+1}\\
%   (\t_i)_1&=\gen{\v,\hat{g_i\cdots g_{d+1}}\o,g_i\v}=\gen{\v,g_i\v}=-\e_i\\
%   (\t_i)_2&=\gen{\v,g_i\cdots g_{d+1}\o,\hat{g_i\v}}=\gen{\v,g_i\cdots g_{d+1}\o}=\s_i`\\
%  \end{align*}
% \[\partial\s_1=g_1g_2g_3g_4\o\ ,\ \partial\s_2=g_2g_3g_4\o\ ,\ \partial\s_3=g_3g_4\o\ ,\ \partial\s_4=g_4\o \]
We describe how $Y$ is obtained from the simplicial complex $X$ associated to the periodic generating system $(g_1,\ldots,g_{d+1})$.
We observe that the path \[\xi=(\e_1,g_1\e_2,g_1g_2\e_3,\ldots,g_1\cdots g_d\e_{d+1})\]
is a cycle starting at $\v$, precisely because $g_1\cdots g_{d+1}=1$.
Thus, we attach a polygon $\q$ with $d+1$ sides to $\xi$, by following the opposite orientation (accordingly with the orientation of $X_2$).
To turn $X\cup G\q$ into a simplicial complex, we operate the barycentric subdivisions of $\q$ thus the vertex $\o$ is the center of $\q$, the edges $g_1\cdots g_{i-1}\s_i$ link $\o$ to $g_1\cdots g_{i-1}\v$, and for $i=1,\ldots,d+1$ the triangles $g_1\cdots g_{i-1}\t_i=\gen{g_1\cdots g_{i-1}\v,\o,g_1\cdots g_{i}\v}$ fill the polygon
\[\q=\t_1\cup g_1\t_2\cup\ldots\cup g_1\cdots g_d\t_{d+1}\ .\]
Similarly, we can obtain a cellular decomposition of $Y$ starting from  the periodic graph $G[\v,\c_i,\r_i]_i$,
and attaching a polygon $g\b$ with $2d+2$ sides to each cycle $g\xi$ where
\[\xi=(\r_1,-g_1\r_1,g_1\r_2,-g_1g_2\r_2,\ldots,g_1\cdots g_d\r_{d+1},-\r_{d+1})\ .\]
Thus $Y_0=G[\v,\c_1,\ldots,\c_{d+1}]$, $Y_1=G[\r_1,\ldots,\r_{d+1}]$, and $Y_2=G\b$ for
\[\b=\t_1\cup\f_1\cup g_1\f_1\cup g_1\t_2\cup g_1\f_2\cup g_1g_2\f_2\cup\ldots\cup g_1\cdots g_d\t_{d+1}\cup g_1\cdots g_d\f_{d+1}\cup\f_{d+1}\ .\]
It is important to observe that in the theory of Riemann surfaces the cell $\b$ correspond to a fundamental polygon for a canonical tassellation \cite{Magnus}.
\begin{figure}
 \begin{center}
   \includegraphics[width=.71\textwidth]{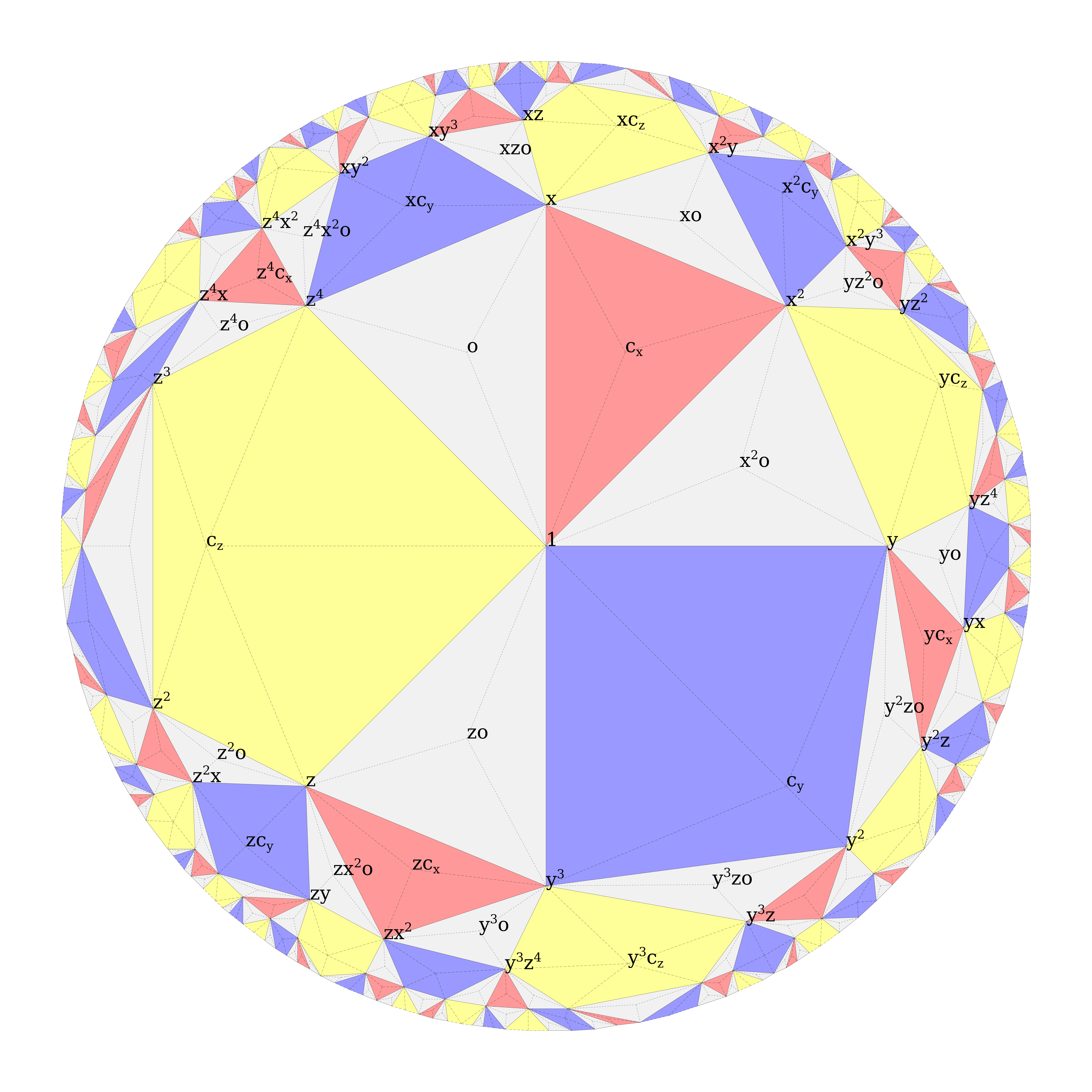}
 \end{center}
 \caption{A portion of the simplicial surface associated to the group $\Delta(3,4,5)$, where
 we denote $y_\ast=(x,y,z)$ and identify $g$ with $g\v$.}
\end{figure}

We show that complex $Y$ is a surface.
First we observe that $\f_i\cap\t_i=\{\e_i\}$, $\f_i\cap g_i\f_i=\{\r_i\}$, and $\t_i\cap g_i\t_{i+1}=\{g_i\s_i\}$, with the exceptions $\f_i\cap g_i\f_i=\{\r_i,g_i\r_i\}$ when $m_i=2$, and $\t_1\cap g\t_{2}=\{g_1\s_1,\s_2\}$ when $d=1$, in such cases $\p_i$ or $\q$ is a polygon with two vertices (\emph{vescica pisces}).
For all the other pairs of faces, the intersection is either empty or a single vertex.
Of course, any point in the interior of a face has 
a neighborhood homeomorphic to an open disk and, since every edge is a common side of two triangles, the same applies to the points in the interior of an edge.
Finally, this is also the case for each vertex in $Y_0$, since $\c_i$ is the center of $\p_i$, $\o$ is the center of $\q$, and around $\v$ we encounter 
$\ldots$, $\s_i$, $\e_i$, $\r_i$, $\inv{g_i}\e_i$, $\s_{i+1}$, $\ldots$ in radial ordering, therefore, a neighborhood of $\v$ homeomorphic to $\mathring\Disk$ can be found in the faces $\ldots$, $\t_i$, $\f_i$, $\inv{g_i}\f_i$, $\inv{g_i}\t_i$, $ \t_{i+1}$, $\ldots$
A simple counting argument gives us the arithmetic invariants.
 \begin{lemma}
 In case the group $G$ is finite, the oriented surface $Y$ is compact of Euler characteristic
 \[\chi(Y)=|G|\left(\sum_{i=1}^{d+1}\frac{1}{m_i}-d+1\right)\]
 and genus $\genus(Y)=1-\chi(Y)/2$.
 \end{lemma}
 This formula appears in a much deeper shape in the theory of Riemann surfaces \cite{AhlforsSario,Breuer}.
 The signatures corresponding to genus 1 and 2 are very restricted.
 The spherical groups, for which $\genus(Y)=0$, are all finite and they consists of two infinite families, namely the cyclic groups $\Z_n$ and the dihedral groups $D_{2n}$, together with the groups $A_4$, $S_4$ and $A_5$ which occur as orientation preserving symmetry groups of platonic solids.
 Parabolic groups, for which $\genus(Y)=1$ so they act on the torus, can be collected in three classes, according to the signatures $(2,3,6)$, $(3,3,3)$, and $(2,2,2,2)$, corresponding to some regular tiling of the euclidean plane.
 In negative characteristic, that is for higher genus, we have finite groups acting on hyperbolic compact surfaces.
 
The surface $Y$ affords the chain complex of modules
\[0\to(\ZG)^{2(d+1)}\to(\ZG)^{3(d+1)}\to(\ZG)^{2}\oplus\ZG_1\oplus\ldots\oplus\ZG_{d+1}\to 0\]
\[0\to\ZG[\f_i,\t_i]_i\to\ZG[\e_i,\r_i,\s_i]_i\to\ZG[\v,\o,\c_i]_i\to 0\]
\[\partial\f_i=g_i\r_i-\r_i+\e_i\ ,\ \partial\t_i=-(g_i\s_{i+1}-\s_i+\e_i)\ ,\ \partial\e_i=g_i\v-\v\ ,\ \partial\r_i=\c_i-\v\ .\]
The cellular decomposition $G[\v,\e_i,\q,\p_i]_i$ affords the chain complex
\[0\to\ZG\oplus\ZG_1\oplus\ldots\oplus\ZG_{d+1}\to(\ZG)^{d+1}\to\ZG\to 0\]
% \[0\to\ZG[\q,\p_i]_i\to\ZG[\e_i]_i\to\ZG\v\to 0\]
\[\partial\q=-(\e_1+g_1\e_2+\cdots+g_1\cdots g_d\e_{d+1})\ \  ,\ \ \partial\p_i=\nu_i\e_i\]
and the cellular decomposition $G[\v,\c_i,\r_i,\b]$ associated with the Cayley graph of $g_\ast$ affords the complex
\[0\to\ZG\to(\ZG)^{d+1}\to\ZG\oplus\ZG_1\oplus\ldots\oplus\ZG_{d+1}\to 0\]
% \[0\to\ZG[\b]\to\ZG[\r_i]_i\to\ZG[\v,\c_i]\to 0\]
\[\partial\b=\r_1-g_1\r_1+g_1\r_2-g_1g_2\r_2+g_1g_2\r_3-\ldots+g_1\cdots g_d\r_{d+1}-\r_{d+1}\ ,\ \partial\r_i=\c_i-\v\ .\]
In homology we have $H_0Y=\Z$ since $Y$ is connected.
Also $H_1Y\simeq S/[S,S]$ is the relation module with respect to the elliptic presentation $1\to S\to\Delta\to G\to 1$ for $\Delta=\Delta(m_\ast)$ where $m_\ast$ is the signature of $g_\ast$, and $\pi(y_\ast)=g_\ast$.
We will prove that the surface $\tilde Y$ associated to $y_\ast$ is simply connected (Theorem \ref{Satz:elliptic universal}), thus the kernel $S$ is isomorphic with the surface group of genus $\genus=\genus(Y)$
\[\pi_1Y\simeq S=\gen{\ s_1,\ldots,s_{2\genus}\ |\ \prod_{i=1}^\genus[s_{2i},s_{2i+1}]\ }\ .\]
In addition, for finite groups we have that $H_2Y=\Z$, since $Y$ is a compact surface, in fact the cycle $\sum_{g\in G}g\b$ in $\ZY_2=\ZG\b$ generates $H_2Y$.
 
 We consider a group extension $1\to N\to H\to G\to 1$ where $H$ admits an elliptic generating system $h_\ast=(h_1,\ldots,h_{d+1})$, so that $G$ admits the generating system $g_\ast=\pi h_\ast$.
If $o(g_i)>1$ for all $i$, that is when none of $h_1,\ldots,h_{d+1}$ belongs to $N$, then also $g_\ast$ is elliptic. In this case we say that $(H,G,h_\ast,g_\ast)$ is a \emph{elliptic pair}, and we obtain a natural continuous map between surfaces $p:W\to Y$, by setting $p(h\sigma)=\pi(h)\sigma$
where the symbol $\sigma\{\v,\o,\c_i,\e_i,\r_i,\s_i,\t,\f_i\}$ denotes any of the generating simplices respectively in $W$ and $Y$.
Once again, the ramification at the vertex $g\c_i$ occurs precisely when $n_i=o(h_i)/o(g_i)>1$, and all the other points of $Y$ are evenly covered by $p$.
Thus, we have the following characterization.
 \begin{theorem}\label{Satz:elliptic covers and topology}
  Any elliptic pair $(H,G,h_\ast,g_\ast)$ determines a continuous map $p:W\to Y$ between orientable surfaces. This map is a covering projection if and only if $h_\ast$ and $g_\ast$ have the same signature, and otherwise it is a ramified covering projection.
 \end{theorem}
 The most interesting case for us is that of an elliptic central extension for some elliptic presentation of a finite group $G$.
 \begin{corollary}
  Any elliptic presentation of a finite group induces a covering projection between compact orientable surfaces.
 \end{corollary}
 \begin{theorem}\label{Satz:elliptic universal}
  Let $1\to S\to\Delta\to G\to 1$ be an elliptic presentation of a finite group $G$, with elliptic generating systems $y_\ast$, and $g_\ast$, so that $\pi(y_\ast)=g_\ast$.
  Then $(\Delta,G,y_\ast,g_\ast)$ induces the universal covering projection $p:\tilde Y\to Y$.
  In particular the surface $\tilde Y$ associated to $y_\ast$ is homeomorphic with the sphere $\Sph^2$ in case $\genus(Y)=0$, or with the plane $\R^2$ in case $\genus{Y}>0$.
 \end{theorem}
\begin{proof}
 Let $Y$ be the surface associated to $g_\ast$.
 Since $Y$ is compact and orientable, then its universal cover $\tilde Y$ is either the sphere or the plane.
 Since $\tilde Y$ is a covering space of $Y$, then every automorphism of $Y$ can be lifted to an automorphism of $\tilde Y$.
 We consider the homomorphism $\varphi:G\to\Aut(Y)$ 
 given by the action on the simplicial structure of $Y$, so that $\varphi(g):\sigma\to g\sigma$ for all $g\in G$ and $\sigma\in Y_k$ where $k=0,1,2$.
 Therefore we obtain a group $\tilde\Delta=\gen{\tilde y_1,\ldots,\tilde y_{d+1}}$ where $\tilde y_i$ is the lifting of $\varphi(g_i)$ in $\Aut(\tilde Y)$.
 The simplicial decomposition of $Y$ induces a simplicial decomposition of $\tilde Y$, on which $\tilde\Delta$ acts simplicially.
 Every combinatorial path in $\tilde Y$ is the lifting of a combinatorial path in $Y$.
 If we consider an oriented path $\tilde\xi$ starting at $\v$, in the only $\e_i$'s. Since $p(\tilde\xi)=(\e_{i_1},g_{i_1}\e_{i_2},\ldots,g_{i_1}\cdots g_{i_{l-1}}\e_{i_l})$, by the unique lifting property we have that $\xi=(\e_{i_1},\tilde y_{i_1}\e_{i_2},\ldots,\tilde y_{i_1}\cdots \tilde y_{i_{l-1}}\e_{i_l})$, in particular, the vertices of $\tilde Y$ lying over $g\v$ are all of the kind $\tilde y\v$, and the edges of $\tilde Y$ lying over $g\e_i$ are all of the kind $\tilde y\e$, for $\tilde y\in\tilde\Delta$.
 Similarly, by considering a path starting at $v$ and terminating over $g\o$ or $g\c_i$, we have that $\tilde Y_0=\tilde\Delta[\v,\o,\c_i]_i$ and $\tilde Y_1=\tilde\Delta[\e_i,\r_i,\s_i]_i$.
 Since the action of $\tilde\Delta$ is a simplical action, we conclude that $\tilde Y=\tilde\Delta[\v,\o,\c_i,\e_i,\r_i,\f_i,\t_i]_i$.
 The closed paths $(\e_i,g_i\e_i,\ldots,g_i^{m_i-1}\e_i)$ and $(\e_1,g_1\e_2,\ldots,g_{1}\cdots g_{d}\e_{d+1})$ are null homotopic in $Y$, so are their unique lifting in $\tilde Y$. It follows that $o(\tilde y_i)=o(g_i)=m_i$ for all $i$ and $y_1\cdots y_{d+1}=1$,  so that $\tilde Y$ is the surface associated to the elliptic generating system $\tilde y_\ast$ of $\tilde\Delta$.
 Finally, we use the universal property of $\tilde Y$ to prove that $\Delta\simeq\tilde\Delta$.
 If $W$ is the surface associated to the elliptic generating system $y_\ast$ of $\Delta$, since $\Delta$ maps surjectively onto $\tilde\Delta$ we have a covering projection $p:W\to\tilde Y$. Since $\tilde Y$ is the universal cover of $Y$, then $p$ must be a homeomorphism.
\end{proof}

 An elliptic presentation $1\to S\to\Delta\to G\to 1$ provides a profinite group $D_\infty=\varprojlim\Delta/[S,_k\Delta]$, which is naturally associated to an inverse system of covering projections between compact orientable surfaces.
 In this direction, Theorem \ref{Satz:profinite covers} extends to an existence theorem for profinite elliptic covers, but in this case it is necessary to begin with a presentation providing an elliptic cover, as in Theorem \ref{Satz:existence of elliptic covers}, after the first step the system continues with Schur covers.
 
 To conclude, we compute the character $\pi$ of the relation module $V=H_1Y$ for the periodic complex and the oriented surface associated to a finite group.
 We tensor over $\C$ to produce semisimple modules, and so we consider the complex $\CY_\ast=\C\otimes\ZY_\ast$:
$0\to\CY_2\to\CY_1\to\CY_0\to 0$.
 Denoting by $\pi_q$ the character afforded by $\CY_q$, we can compute $\pi$ by the formula
\[\pi=-\pi_2+\pi_1-\pi_0+1_G\ .\]
 Indeed, we have $\CY_1\simeq \ker\partial_1\oplus\im\partial_1\simeq\im\partial_2\oplus \C V\oplus\im\partial_1\simeq\CY_2\oplus \C V\oplus\im\partial_1$,
 so that $\pi=\pi_1-\pi_2-\vartheta$ where $\vartheta$ is the character afforderd by $\im\partial_1$.
 On the other hand, since $H_0Y=\Z$ is the trivial $\ZG$-module, then $\CY_0\simeq\im\partial_1\oplus\C$ so that $\im\partial_1$ affords $\vartheta=\pi_0-1_G$.
 
 In the case of the periodic cellular complex, we have the chain complex
 \[0\to\C\uparrow_{\gen{g_1}}^G\oplus\ldots\oplus\C\uparrow_{\gen{g_{d+1}}}^G\to(\CG)^d\to\CG\to 0\]
 so that the character afforded by $H_1Y\simeq R/[R,R]$ is
 \[\pi=(d-1)\rho-\sum_{i=1}^d1_{\gen{g_i}}^G+1_G\]
  where $\rho$ is the regular character of $G$.
  
  In the case of the orientable surface associated to an elliptic presentation, we have the chain complex
 \[0\to\CG^{2(d+1)}\to\CG^{3(d+1)}\to(\CG)^2\oplus\C\uparrow_{\gen{g_1}}^G\oplus\ldots\oplus\C\uparrow_{\gen{g_{d+1}}}^G\to 0\]
 so that, the character afforded by $V\simeq S/[S,S]$ is 
 the \emph{Lefschetz character}
 \[\eta=(d-1)\rho-\sum_{i=1}^{d+1}1_{\gen{g_{i}}}^G+1_G\]
 in turn $\eta$ satisfies $\eta(1)=\chi(\Delta/S)$, and $\eta(g)=1-|\Fix_Y(g)|$
 for all $g\neq 1$.
 By realizing $Y$ as a Riemann surface, the Lefschetz character is the sum $\chi+\bar{\chi}$ where $\chi$ is the \emph{Eichler character}, that is afforded by the action of $G$ on the space of differential forms $\mathcal{H}^1(Y)$ \cite{Breuer}.

\end{document}